\def\G{\Gamma}
\newcommand{\pr}{\mathbf{Pr}}
\newcommand{\set}[1]{\left\{ #1 \right\}}
\newcommand{\pic}[1]{
\begin{tikzpicture}
#1
\end{tikzpicture}}
\newcommand{\rdown}[1]{\left\lfloor#1\right\rfloor}
 \newcommand{\bag}{\begin{align}}
\newcommand{\bags}{\begin{align*}}
\newcommand{\eag}{\end{align*}}
\newcommand{\eags}{\end{align*}}
\newtheorem{thm}{Theorem}
\newtheorem{lem}[thm]{Lemma}
\newtheorem{notn}[thm]{Notation}
\newcommand\cB{\mathcal B}
\newcommand\cE{\mathcal E}
\newcommand\f{\phi}
\newcommand\bd{\mathbf{d}}
\newcommand\ex{\mathbb{E}}
\newcommand\bfrac[2]{\left(\frac{#1}{#2}\right)}
\newcommand{\leo}{\leq_O}
\newcommand{\geo}{\geq_O}
\newcommand{\beq}[2]{\begin{equation}\label{#1}#2\end{equation}}
\newcommand{\brac}[1]{\left(#1\right)}
\begin{document}
\title{Finding perfect matchings in random cubic graphs in linear expected time}
\author{ Michael Anastos and Alan Frieze\thanks{Research supported in part by NSF Grant DMS1363136}}

\maketitle
\begin{abstract}
In a seminal paper on finding large matchings in sparse random graphs, Karp and Sipser \cite{KS} proposed two algorithms for this task. The second algorithm has been intensely studied, but due to technical difficulties, the first algorithm has received less attention. Empirical results in \cite{KS} suggest that the first algorithm is superior. In this paper we show that this is indeed the case, at least for random cubic graphs. We show that w.h.p. the first algorithm will find a matching of size $n/2 - O(\log n)$ on a random cubic graph (indeed on a random graph with degrees in $\set{3,4}$). We also show that the algorithm can be adapted to find a perfect matching w.h.p. in $O(n)$ time, as opposed to $O(n^{3/2})$ time for the worst-case.
\end{abstract}
\section{Introduction}
Given a graph $G=(V,E)$, a matching $M$ of $G$ is a subset of edges such that no vertex is incident to two edges in $M$. Finding a maximum cardinality matching is a central problem in algorithmic graph theory. The most efficient algorithm for general graphs is that given by Micali and Vazirani  \cite{MV} and runs in $O(|E||V|^{1/2})$ time.

In a seminal paper, Karp and Sipser \cite{KS} introduced two simple greedy algorithms for finding a large matching in the random graph $G_{n,m},m=cn/2$ for some positive constant $c>0$. Let us call them  Algorithms 1 and 2 as they are in \cite{KS}. Algorithm 2 is simpler than Algorithm 1 and has been intensely studied: see for example Aronson, Frieze and Pittel \cite{AFP}, Bohman and Frieze \cite{BF}, Balister and Gerke \cite{BG} or Bordenave and Lelarge \cite{BL}. In particular, \cite{AFP} together with Frieze and Pittel \cite{FP} shows that w.h.p. Algorithm 2 finds a matching that is within $\tilde{\Theta}(n^{1/5})$  of the optimum, when applied to $G_{n,m}$. Subsequently, Chebolu, Frieze and Melsted \cite{CFP} showed how to use Algorithm 2 as a basis for a linear expected time algorithm, when $c$ is sufficiently large.

Algorithm 2 proceeds as follows (a formal definition of Algorithm 1 is given in the next section). While there are isolated vertices  it deletes them. After which, while there are vertices of degree one in the graph, it chooses one at random and  adds the edge incident with it to the matching and deletes the endpoints of the edge. Otherwise, if the current graph has minimum degree at least two, then it adds a random edge to the matching and deletes the endpoints of the edge. 

In the same paper Karp and Sipser proposed another algorithm for finding a matching that also runs in linear time. This was Algorithm 1. The algorithm sequentially reduces the graph until it reaches the empty graph. Then it unwinds some of the actions that it has taken and grows a matching which is then output. Even though it was shown empirically to outperform Algorithm 2, it has not been rigorously analyzed. In this paper we analyze Algorithm 2 in the special case where the graph is random with a fixed degree sequence $3\leq d(i)\leq 4$ for $i=1,2,\ldots,n$. We prove the following:
\begin{thm}\label{main}
Let $G$ be a random graph with degree sequence $3\leq d(i)\leq 4$ for $i=1,2,\ldots,n$. Then 
\begin{enumerate}[(a)]
\item Algorithm 1 finds a matching of size $n/2-O(\log n)$, w.h.p.
\item Algorithm 1 can be modified to find a (near) perfect matching in $O(n)$ time w.h.p. and in expectation.
\end{enumerate}
\end{thm}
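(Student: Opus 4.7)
The plan is to analyze Algorithm 1 in the configuration model, exposing pairings only as the algorithm demands them so that the unexplored stubs remain uniformly random at every step. Let $Y_k(t)$ denote the number of vertices of reduced degree $k$ after $t$ steps, where I also count the super-vertices produced by the degree-$2$ collapse rule. Since the initial minimum degree is $3$, Algorithm 1 starts by selecting a random edge $(u,v)$; after its endpoints are deleted the four outside neighbours typically drop to degree $2$, triggering a cascade of collapses and degree-$1$ matchings. I would derive one-step conditional expectations for $(Y_k)$ and invoke Wormald's differential equation method to conclude that $(Y_k(t)/n)$ tracks the solution of a smooth ODE, uniformly in $t$, up to an error of $O(n^{-1/2}\log n)$. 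A consequence is that at most $O(\log n)$ super-vertices survive when the reduction phase terminates, noting that when the unwinding phase is carried out, each unmatched super-vertex in the reduced graph contributes exactly one unmatched original vertex.

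The unmatched vertices arise from two sources: (i) ``bad'' local events in which a collapse leaves a vertex stranded, for instance when the two endpoints of a degree-$2$ path get merged into the same super-vertex, or when a collapsed super-vertex becomes isolated; and (ii) the residual fragment left when the reduction terminates. Each step produces a stranded vertex with probability $O(1/n)$, so a standard Azuma/martingale tail bound yields $O(\log n)$ strandings w.h.p., while the residual is likewise $O(\log n)$ by the ODE analysis. Together these establish part~(a). For part~(b), the plan is to augment the near-perfect matching obtained from part~(a) via alternating BFS launched from each of the $O(\log n)$ unmatched vertices. Random graphs with degrees in $\{3,4\}$ are good expanders w.h.p.---every vertex set $S$ with $|S| \le n/2$ has $\Omega(|S|)$ external neighbours---so the alternating BFS layers from any unmatched vertex grow by a constant factor per level, reach size $\Omega(n)$ within depth $O(\log n)$, and must then meet another unmatched vertex, yielding an augmenting path of length $O(\log n)$. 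Each augmentation therefore costs $O(\log n)$ time, the whole augmentation phase costs $O(\log^2 n) = o(n)$, and together with the linear execution of Algorithm 1 this yields the claimed $O(n)$ bound w.h.p.\ and in expectation.

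The main obstacle will be controlling the distribution of the \emph{residual} graph at the end of Algorithm 1: it is no longer a uniform random near-cubic graph, and the expander property required for part~(b) must be inherited from the parent graph despite the conditioning induced by the execution of Algorithm 1. The remedy I envisage is to note that only $O(\log n)$ algorithmic choices materially affect the residual distribution, and to combine a union bound over possible ``exposure histories'' with a switching/coupling argument that transfers the expansion bound from the unconditioned configuration model. Careful bookkeeping of the collapse cascades---which can create degree-$4$ super-vertices and alter local neighbourhoods in correlated ways---and of the occasional expected-time excursions in which the BFS has to go deeper than $O(\log n)$, will be the technical heart of the proof, and a worst-case fallback to Micali--Vazirani is needed only in a negligible-probability event to keep the expectation bounded.
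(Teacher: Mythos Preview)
Your plan contains a genuine gap in part~(a), and a workable but harder-than-you-think alternative in part~(b).

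\textbf{Part (a).} The central difficulty with Algorithm~1, and the reason it has resisted analysis, is that the degree-$2$ contraction rule manufactures vertices of \emph{unbounded} degree: contracting $v$ with neighbours $a,b$ produces a super-vertex of degree $d(a)+d(b)-2$, and such contractions cascade. So even though the input has degrees in $\{3,4\}$, after a few steps you may have degrees $5,6,7,\ldots$, and Wormald's method for the infinite family $(Y_k)_{k\ge 0}$ does not go through without an a~priori control on the tail of the degree distribution. You also describe the random-edge step as ``selecting a random edge $(u,v)$'', but Algorithm~1 in fact deletes a random edge incident to a vertex of \emph{maximum} degree; this choice is deliberate and is exactly what keeps the degrees from blowing up. The paper does not use the differential-equation method at all. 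Instead it tracks the scalar excess $ex_4(G)=\sum_v (d(v)-4)\mathbb{I}(d(v)>4)$, groups consecutive algorithm steps into ``hyperactions'' (one max-edge removal plus the ensuing cascade), classifies the hyperactions into a handful of types by a case analysis, and shows that conditional on $ex_4\le \log^2 |V|$ the excess has strictly negative drift whenever positive and jumps by at most $2$. A supermartingale/Hoeffding argument then keeps $ex_4=O(\log^2 |V|)$ throughout, which in turn implies the bad local events you mention (parallel edges, short cycles at the chosen vertex) have probability $o(|V|^{-1.9})$, so no vertex-$0$ removals or bad contractions occur until the graph is of size $O(\log n)$. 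Your martingale intuition for counting strandings is correct, but it rests on the $O(1/\text{current size})$ probability estimate, and that estimate is only valid once the excess is under control---which is the whole ballgame.

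\textbf{Part (b).} Your augmenting-path-via-expansion plan is not what the paper does, and the obstacle you yourself flag is real: after Algorithm~1 runs to completion the graph is fully exposed and heavily conditioned, so appealing to expansion in the unconditioned model is delicate. The paper sidesteps this entirely. Because the excess returns to zero every $O(\log^2 |V|)$ hyperactions, one can halt {\sc reduce} at a moment when $ex_4=0$ and $|V|\in[\omega,2\omega]$ with $\omega=n^{2/3}$; at that moment the residual graph is still a uniform configuration-model graph with all degrees in $\{3,4\}$, so a separate (Tutte-condition) lemma shows it has a perfect matching w.h.p., and Micali--Vazirani finds it in $O(\omega^{3/2})=O(n)$ time. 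The unwinding then lifts this to a perfect matching of $G$ with no further loss. This is considerably cleaner than arguing about augmenting paths in a conditioned graph.
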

A (near) perfect matching is one of size $\rdown{n/2}$. Note that in the case of cubic graphs, it is known that they have (near) perfect matchings w.h.p., see Bollob\'as \cite{Bol}. Note also that it was shown by Frieze, Radcliffe and Suen \cite{FRS} that w.h.p. Algorithm 2 finds a matching of size $n/2-\tilde{\Theta}(n^{1/5})$. \footnote{Recently, the junior author has extended Theorem \ref{main} to random $r$-regular graphs for all $3\leq r=O(1)$.}
\section{The Algorithm}
The algorithm that is given in \cite{KS}  can be split into two parts. The first part sequentially reduces the graph until it reaches the empty graph. Then the second part reverses part of this reduction and grows a matching which is then output. 

To reduce the graph, 
\begin{enumerate}[(1)]
\item First, while there are vertices of degree 0 or degree 1 the algorithm removes them along with any edge incident to them. The edges removed at this stage will be part of the output matching. 
\item Second, while there are vertices of degree 2 the algorithm contracts them along with their two neighbors. That is the induced path $(x,y,z)$ is replaced by a single contracted vertex $y_c$ whose neighbors are those of $x,z$ other than $y$. The description in \cite{KS} does not explicitly say what to do with loops or multiple edges created by this process. In any case, such creations are very rare. We say a little more on this in Section \ref{details}.

In the unwinding, if we have so far constructed a matching containing an edge $\set{y_c,\xi}$ incident with $y_c$ and $\xi$ is a neighbor of $x$ then in our matching we replace this edge by $\set{x,\xi}$ and $\set{y,z}$. If there is no matching edge so far chosen incident with $y_c$ then we add an arbitrary one of $\set{x,y}$ or $\set{y,z}$ to our matching.
\item Finally if the graph has minimum degree 3 then a random vertex is chosen among those of maximum degree and then a random edge incident to that vertex is deleted. These edges will not be used in the unwinding.
\end{enumerate}   
\subsection{Idea of proof:} No mistakes are made while handling vertices of degree 0,1 or 2. Each mistake decreases the size of the final matching produced by one from the maximum size. We will show that mistakes occur only at parts of the graph that have become denser than is likely. 

We show that w.h.p. the maximum degree remains $O(\log^{2}\nu)$ where $\nu$ is the number of vertices remaining and so as long as $\nu\log n$, say, then w.h.p. there will be no dense subgraphs and the algorithm will not make any mistakes. This explains the $O(\log n)$ error term. Finally, we assert that removing an edge incident to a vertex of a maximum degree will help to control the maximum degree, explaining this choice of edge to delete.

\subsection{Details}\label{details}
The precise algorithm that we analyze is called {\sc reduce-construct} The algorithm description given in \cite{KS} is not explicit  in how to deal with loops and multiple edges, as they arise. We will remove loops immediately, but keep the multiple edges until removed by other operations. 

We assume that our input (multi-)graph $G = G([n],E)$ has degree sequence $\bd$ and is generated by the configuration model of Bollob\'as \cite{Bol}. Let $W=[2\nu]$, $2\nu=\sum_{i=1}^n d(i)$, be our set of {\em configuration points} and let $\Phi$ be the set of {\em configurations} i.e. functions $\phi:W \mapsto [n]$ that such that $|\f^{-1}(i)|=d(i)$ for every $i \in [n]$. Given $\phi \in \Phi$ we define the graph $G_\phi=([n],E_\phi)$ where $E_\phi=\{\{\phi(2j-1),\phi(2j)\}: j\in [\nu] \}$. Choosing a function $\phi \in \Phi$ uniformly at random yields a random (multi-)graph $G_\phi$ with degree sequence $\bd$. 

It is known that conditional on $G_\f$ being simple, i.e. having no loops or multiple edges, it is equally likely to be any graph that has degree sequence $\bd$. Also, if the maximum degree is $O(1)$ then the probability that $G_\f$ is simple is bounded below by a positive quantity that is independent of $n$. Thus results on this model can be translated immediately to random simple graphs.

We split the {\sc reduce-construct} Algorithm into the {\sc reduce} and {\sc construct} algorithms which we present separately.

\textbf{Algorithm} \textsc{Reduce}:

The input $G_0=G_\f$ where we condition on there being no loops.\\ 
$i=\hat{\tau}=0$.
\\ \textbf{While} $G_i=(V_i,E_i) \neq (\emptyset,\emptyset)$ do: 
\begin{itemize}
\item[]\textbf{If} $\delta(G_i)=0$: Perform a {\bf vertex-0 removal}: choose a  vertex of degree 0 and remove it from $V_i$.
\item[]\textbf{Else if} $\delta(G_i)=1$: Perform a {\bf vertex-1 removal}: choose a random vertex $v$ of degree 1 and remove it along with its neighbor $w$ and any edge incident to  $w$. 
\item[]\textbf{Else if} $\delta(G_i)=2$: Perform a {\bf contraction}: choose a random vertex $v$ of degree 2. Then replace $\{v\} \cup N(v)$ ($v$ and its neighbors $N(v)$) by a single vertex $v_c$. For $u\in V \setminus (\{v\} \cup N(v))$, $u$ is joined to $v_c$ by as many edges as there are in $G_i$ from $u$ to $\{v\} \cup N(v)$.  Remove any loops created.
\item[]\textbf{Else if } $\delta(G_i)\geq 3$: Perform a {\bf max-edge removal}: choose a random vertex of maximum degree and remove a random edge incident with it.
\\ \textbf{End if}
\item[]\textbf{If} the last action was a max-edge removal, say the removal of edge $\{u,v\}$ and in the current graph we have $d(u)=2$ and $u$ is joined to a single vertex $w$ by a pair of parallel edges then perform an {\bf auto correction contraction}: contract $u,v$ and $w$ into a single vertex. Remove any loops created.
\\ \textbf{End If}
\item[] Set $i=i+1$ and let $G_i$ be the current graph.
\end{itemize}
\textbf{End While}
\\Set $\hat{\tau}=i$.

Observe that we only reveal edges (pairs of the form $(\phi(2j-1),\phi(2j)): j\in [\nu]$) of $G_\phi$ as the need arises in the algorithm. Moreover the algorithm removes any edges that are revealed. Thus if  we let $\bd(i)$  be the degree sequence of $G_i$ then, given $\bd(i)$ and the actions performed by {\sc reduce} until it generates $G_i$ we have that $G_i$ is uniformly distributed among all configurations with degree sequence $\bd(i)$ and no loops.

Call a contraction that is performed by {\sc reduce} and involves only 2 vertices \emph{bad} i.e. one where we contract $u,v$ to a single vertex given that $G$ contains a parallel pair of the edge $\set{u,v}$ and $u$ has degree 2. Otherwise call it \emph{good}. Observe that a bad contraction can potentially be a mistake while a good contraction is never a mistake. By introducing the auto correction contraction we replace the bad contraction of the vertex set $\{u,w\}$, as presented in the description of {\sc reduce}, with the good contraction of the vertex set $\{v,u,w\}$. Note that we do not claim that all bad contractions can be dealt with in this way. We only show later that other instance of bad contractions are very unlikely.

We now describe how we unwind the operations of {\sc reduce} to provide us with a matching.

\textbf{Algorithm} {\sc construct}:

\textbf{Input:} $G_0,G_1,...,G_{\hat{\tau}}$ - 
the graph sequence produced by {\sc reduce},
an integer $j\in \{0,1,...,\hat{\tau}\}$ and  a matching $M_{j}$ of $G_j$. (We allow the possibility of stopping {\sc reduce} before it has finished.
If we do so when $|V(G_j)|=\Theta(n^{2/3})$ then, given that $G_j$ has a perfect matching w.h.p., we can use the $O(|E||V|^{1/2})$ algorithm of \cite{MV} applied to $G_j$ to find a perfect matching $M_j$ of $G_j$ in $O(n)$ time. Thereafter we can use {\sc construct} to extend $M_j$ to a matching of $G_0$.)
\\ \textbf{For $i=1$ to $j$ do}: 
\begin{itemize}
\item[]\textbf{If} $\delta(G_{j-i})=0$: Set $M_{j-i}=M_{j-i+1}$
\item[]\textbf{Else if} $\delta(G_{j-i})=1$: Let $v$ be the vertex of degree 1 chosen at the $(j-i)th$ step of {\sc reduce} and let $e$ be the edge that is incident to $v$ in $G_{j-i}$. Then, 
Set $M_{j-i}=M_{j-i+1}\cup\{e\}$.
\item[]\textbf{Else if} $\delta(G_{j-i})=2$: Let $v$ be the vertex of degree 2 selected in $G_{j-i}$. If $|N(v)|=1$ i.e. $v$ is joined to a single vertex by a double edge in $G_{j-i}$, set $M_{j-i}=M_{j-i+1}$. Else let $N(v)=\{u,w\}$ and $v_c$ be the new vertex resulting from the contraction of $\{v,u,w\}$. If $v_c$ is not covered by $M_{j-i+1}$ then set $M_{j-i}=M_{j-i+1}\cup\{v,u\}$. Otherwise assume that $\{v_c,z\}\in M_{j-i+1}$ for some $z\in V(G_{j-i})$. Without loss of generality assume that in $G_{j-i}$, $z$ is connected to $u$. Set
$M_{j-i}=(M_{j-i+1}\cup \{\{v,w\},\{u,z\}\})\setminus \{ v_c,z\}$.
\item[]\textbf{Else if }: $\delta(G_{j-i})\geq 3$: Set $M_{j-i}=M_{j-i+1}$
\end{itemize}
\textbf{End For}

For a graph $G$ and $j\in \{0,1,...,\hat{\tau}\}$ denote by $R_0(G,j)$ and $R_{2b}(G,j)$ the number of times that  {\sc reduce} has performed a vertex-0 removal and a bad contraction respectively until it generates  $G_j$. For a graph $G$ and a matching $M$ denote by $\kappa(G,M)$ the number of vertices that are not covered by $M$. The following Lemma determines the quality of the output of the {\sc reduce-construct} algorithm.
\begin{lem}\label{correction}
Let $G$ be a graph and $M$ be the output of the Reduce-Backtrack algorithm applied to $G$. Then, for $j\geq 0$,
\beq{true}{
\kappa(G,M)=R_0(G,j)+R_{2b}(G,j)+\kappa(G_j,M_j).
}
\end{lem}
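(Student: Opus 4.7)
My plan is to prove (\ref{true}) by tracking how the statistic $\kappa(G_{j-i},M_{j-i})$ evolves as {\sc construct} undoes the operations of {\sc reduce} one at a time. For $i=1,\ldots,j$ set
$$\Delta_i := \kappa(G_{j-i},M_{j-i}) - \kappa(G_{j-i+1},M_{j-i+1}).$$
Telescoping gives $\sum_{i=1}^{j}\Delta_i = \kappa(G,M) - \kappa(G_j,M_j)$, so (\ref{true}) reduces to showing that $\Delta_i=1$ exactly when the step of {\sc reduce} that produced $G_{j-i+1}$ from $G_{j-i}$ was a vertex-0 removal or a bad contraction, and $\Delta_i=0$ otherwise; the increments for each reduce step then accumulate into $R_0(G,j)+R_{2b}(G,j)$.

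The bulk of the argument is a case analysis on the type of operation being undone. A vertex-0 removal restores one isolated (hence uncovered) vertex while leaving the matching unchanged, so $\Delta_i=1$. A vertex-1 removal restores two vertices together with a matching edge joining them, covering both, so $\Delta_i=0$. A max-edge removal does not alter the vertex set nor the matching, so $\Delta_i=0$.

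The delicate case is the contraction. For a good contraction, the three vertices $v,u,w$ with $N(v)=\{u,w\}$ in $G_{j-i}$ are restored in place of $v_c$. If $v_c$ is uncovered in $M_{j-i+1}$, {\sc construct} adjoins $\{v,u\}$ to the matching, leaving only $w$ uncovered—one uncovered vertex replaces one uncovered vertex, so $\Delta_i=0$. If $v_c$ is matched to some $z$, then $\{v_c,z\}$ is swapped for $\{v,w\}$ and $\{u,z\}$; here I need to verify that these replacement edges actually lie in $G_{j-i}$, which holds because $v$ is adjacent to both $u$ and $w$ in $G_{j-i}$ and the edge from $v_c$ to $z$ in $G_{j-i+1}$ is inherited (WLOG) from the edge $\{u,z\}$ in $G_{j-i}$. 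All three restored vertices are covered while $z$ remains covered, giving $\Delta_i=0$. For a bad contraction $v_c$ is effectively split back into $v$ and $u$ (with $v_c$ identified with $u$, since $v$'s two edges were both internal to the contracted set); the matching is kept, so $v$ appears as a new uncovered vertex and $\Delta_i=1$. Auto-correction contractions are handled identically to good contractions since they too involve three restored vertices.

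The only step requiring any real care is verifying the edge-existence claim in the matched subcase of a good contraction; once this is in hand the identity follows immediately by telescoping the $\Delta_i$ across $i=1,\ldots,j$, with the $j=0$ case being trivial since both sides equal $\kappa(G,M)$.
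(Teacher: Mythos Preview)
Your proposal is correct and follows essentially the same approach as the paper's proof: both arguments track $\kappa(G_{j-i},M_{j-i})$ as $i$ runs from $0$ to $j$ (the paper phrases this as an induction on $i$, you phrase it as a telescoping sum of increments $\Delta_i$), and both rely on the same case analysis showing that $\kappa$ increases by $1$ precisely at vertex-$0$ removals and bad contractions and is unchanged otherwise. Your write-up is in fact slightly more explicit than the paper's in verifying the edge-existence claim in the matched subcase of a good contraction and in noting how the auto-correction contraction is handled.
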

\begin{proof}
Let $G=G_0,G_1,...,G_{\hat{\tau}}$ be the sequence of graphs produced by {\sc reduce} and let $M_j,M_{j-1},...,M_0=M$ be the sequence of matchings produced by {\sc construct}. Let $R_0(G,j,i)$ and $R_{2b}(G,j,i)$ be the number of vertex-0 removals and bad contractions performed by {\sc reduce} going from $G_{j-i}$ to $G_j$. We will prove that for every  $0\leq i\leq j$, 
\beq{troo}{
\kappa(G_{j-i},M_{j-i})=R_0(G,j,i)+R_{2b}(G,j,i)+\kappa(G_j,M_j).
} 
Taking $i=j$ yields the desired result.

For $i=0$, equation \eqref{true} holds as $R_0(G,j,0)=R_{2b}(G,j,0)=0$. Assume inductively that \eqref{true} holds for $i=k-1$ where $k$ satisfies $0<k\leq j$. For $i=k$, if a max-edge deletion was performed on $G_{j-k}$ then $|V_{j-k}|=|V_{j-k+1}|$. Furthermore, $R_0(G,j,k)= R_0(G,j,k-1)$ and $R_{2b}(G,j,k)=R_{2b}(G,j,k-1)$ and hence \eqref{true} continues to hold. If a vertex-0 deletion or a bad contraction was performed on $G_{j-k}$ then $|V_{j-k}|=|V_{j-k+1}|+1$ and $M_{j-k}=M_{j-k+1}$. In the case of a vertex-0 deletion we have $R_0(G,j,k)= R_0(G,j,k-1)+1$ and $R_{2b}(G,j,k)=R_{2b}(G,j,k-1)$ and both sides of \eqref{troo} increase by one. In the case of a bad contraction we have $R_0(G,j,k)= R_0(G,j,k-1)$ and $R_{2b}(G,j,k)=R_{2b}(G,j,k-1)+1$ and again  both sides of \eqref{troo} increase by one. Finally if a good contraction or a vertex-1 removal was performed on $G_{j-k}$ then $R_0(G,j,k)= R_0(G,j,k-1)$ and $R_{2b}(G,j,k)=R_{2b}(G,j,k-1)$. At the same time we have that $\kappa(G_{j-i},M_{j-i})=\kappa(G_{j-i+1},M_{j-i+1})$, completing the induction.
\end{proof}
\subsection{Organizing the actions taken by {\sc reduce}}
We do not analyze the effects of each action taken by {\sc reduce} individually. Instead we group together sequences of actions, into what we call {\em hyperactions}, and we analyze the effects of the individual hyperactions.


We construct a sub-sequence $\G_0=G,\G_1,\ldots,\G_\tau$ of $G_0,G_1,...,G_{\hat{\tau}}$. Every hyperaction, starts with a max-edge removal and it consists of all the actions taken until the next max-edge removal. We let $\Gamma_{i}$ be the graph that results from performing the first $i$ hyperactions. Thus $\Gamma_{i}$ is the $i$th graph in the sequence $G_0,G_1,...,G_{\hat{\tau}}$ that has minimum degree at least 3 and going from $\Gamma_{i}$ to $\Gamma_{i+1}$ {\sc reduce} performs a max-edge removal followed by a sequence of vertex-0, vertex-1 removals and contractions. Thus $\Gamma_0,\Gamma_1,...,\Gamma_{\tau}$ consists of all the graphs in the sequence $G_0,G_1,...,G_{\hat{\tau}}$ with minimum degree at least 3.  
\subsection{Excess and Proof Mechanics}\label{list}
The central quantity of this paper is the \emph{excess} which we denote by $ex_\ell(\cdot)$. For a graph $G$ and a positive  integer $\ell$ we let 
$$ ex_\ell(G):=\sum_{v \in V(G)} [d(v)-\ell]\mathbb{I}(d(v)>\ell).$$
{\bf Hyperactions of Interest}\\
For the analysis of {\sc reduce} we consider 7 distinct hyperactions (sequences of actions) which we call hyperactions of Type 1,2,3,4,5,33 and 34 respectively. In the case that the maximum degree is larger than 3 we consider the following hyperactions: we have put some diagrams of these hyperactions at the end of the paper.
\begin{itemize}
\item[]\textbf{ Type 1}: A single max-edge removal,
\item[]\textbf{ Type 2}: A max edge-removal followed by an auto correction contraction.
\item[]\textbf{ Type 3}: A single max-edge removal followed by a good contraction. 
\item[]\textbf{ Type 4}: A single max-edge removal followed by 2 good contractions.
In this case we add the restriction that there are exactly 6 distinct vertices $v,u,x_1,x_2,w_1,w_2$ involved in this hyperaction and they satisfy the following:
(i) $v$ is a vertex of maximum degree, it is adjacent to $u$ and $\{u,v\}$ is removed during the max-edge removal, (ii) $d(u)=d(x_1)=d(x_2)=3$, (iii) $N(u)=\{v,x_1,x_2\}$, $N(x_1)=\{u,x_2,w_1\}$ and $N(x_2)=\{u,x_1,w_2\}$. (Thus $\set{u,x_1,x_2}$ form a triangle.) The two contractions have the same effect as contracting $\{u,x_1,x_2,w_1,w_2\}$ into a single vertex.
\end{itemize}
 In the case that the maximum degree equals 3 we also consider the following hyperactions:
\begin{itemize}
\item[] \textbf{Type 5}:  A max-edge removal followed by 2 good contractions that interact.
In this case the 5 vertices $u,v,x_1,x_2,z$ involved in the hyperaction satisfy the following:
(i) $\{u,v\}$ is the edge removed by the max-edge removal, (ii) $N(v)=\{u,x_1,x_2\}$, $N(u)=\{v,x_1,z\}$, (so $\set{u,v,x_1}$ form a triangle), (iii) $|(N(x_1) \cup N(x_2) \cup N(z)) \setminus \set{u,v,x_1,x_2,z}|\geq 3$. This hyperaction has the same effect as contracting all of $\set{u,v,x_1,x_2,z}$ into a single vertex.
\item[] \textbf{Type 33}: A max-edge removal followed by 2 good contractions that do not interact. There are 6 distinct vertices involved $v,v_1,v_2,u,u_1,u_2$. During the max-edge removal $\{u,v\}$ is removed. Thereafter each of the 2 sets of vertices $\{v,v_1,v_2\}$ and $\{u,u_1,u_2\}$ is contracted to a single vertex.
\item[] \textbf{Type 34}: A max-edge removal followed by 3 good contractions. There are 8 distinct vertices involved $v,v_1,v_2,v,u,u_1,u_2,w_1,w_2$. During the max-edge removal $\{u,v\}$ is removed. The conditions satisfied by $v,u,u_1,u_2,w_1,w_2$ and the actions that are performed on them are similar to the ones in a hyperaction of Type 4. The difference now is that $v$ has degree 3 before the hyperaction. In addition $\{v,v_1,v_2\}$ is contracted into a single vertex.
\end{itemize}
We divide Hyperactions of Type 3 into three classes. Assume that during a Hyperaction of Type 3 the set $\{v,a,b\}$ is contracted, $v$ is the contracted vertex and $v_c$ is the new vertex. We say that such a Hyperaction is of \textbf{Type 3a} if $d(v_c)=d(a)+d(b)-2$, is of \textbf{Type 3b} if $d(v_c)=d(a)+d(b)-4$ and is of \textbf{Type 3c} if $d(v_c)<d(a)+d(b)-4$. Note that in general, $d(v_c)=d(a)+d(b)-2-2\eta_{a,b}$, where $\eta_{a,b}$ is the number of edges joining $a,b$ plus the number of edges joining $\set{u,v}$ after the max-edge removal.

With the exception of a Hyperaction of Type 3c, where $\eta_{a,b}\geq 2$, we refer to the Hyperactions of interest as good Hyperactions. We call any Hyperaction that is not good, including a Hyperaction of Type 3c, bad. 

We next state three lemmas, whose proofs will be deferred to Section \ref{LemmasDegree}.
\begin{lem}\label{hyper}
Let $i\geq 0$ and assume that $\G_i$  satisfies 
$ex_\ell(\G_i)\leq \log^2 |V(\G_i)|$ for some $3\leq \ell =O(1)$.
 Then with probability $1-o(|V(\G_i)|^{-1.9})$ the hyperaction that {\sc reduce} applies to $\G_i$ is good. In addition, it only applies a Hyperaction of Type 2, 3b or 4 with probability $o(|V(\G_i)|^{-0.9})$.
\end{lem}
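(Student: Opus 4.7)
The plan is to invoke the observation (made just after the description of \textsc{Reduce}) that, conditional on its degree sequence and on having no loops, $\Gamma_i$ is uniform over configurations, and then to argue that any off-list hyperaction requires an unlikely local coincidence in the random pairing of half-edges. Write $\nu=|V(\Gamma_i)|$, $D_i=\sum_v d(v)$, and $\Delta_i=\max_v d(v)$. The hypothesis $ex_\ell(\Gamma_i)\leq\log^2\nu$ gives $D_i\leq \ell\nu+\log^2\nu=O(\nu)$ and $\Delta_i\leq\ell+\log^2\nu=O(\log^2\nu)$, so the probability that any revealed half-edge's partner lies at a specified vertex $w$ is at most $d(w)/(D_i-O(1))=O(\Delta_i/\nu)=O(\log^2\nu/\nu)$, while the expected number of edges between two fixed vertices is $O(\Delta_i^2/D_i)=O(\log^4\nu/\nu)$.

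The hyperaction is determined by $O(1)$ many edge revelations at the endpoints $u,v$ of the max-edge-removal step and their surviving neighbours. I would perform a case analysis split by whether $\Delta_i>3$ (only $u$ can drop to degree $\leq 2$, giving Types 1, 2, 3a, 3b, 3c, and 4 together with their bad extensions) or $\Delta_i=3$ (both endpoints drop to degree 2, giving Types 5, 33, 34 and bad extensions). In each subcase the good types arise under the minimal structural assumptions compatible with the configuration: Type 2 needs the parallel-edges configuration at $u$; Type 3a is generic; Type 3b requires exactly one extra edge, i.e.\ $\eta_{a,b}=1$; Type 4 requires a triangle $\{u,x_1,x_2\}$ with $d(x_1)=d(x_2)=3$; Types 5, 33, 34 are analogues in the all-degree-3 setting. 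Anything else is bad: Type 3c asks for $\eta_{a,b}\geq 2$; a third contraction beyond Type 34 needs an additional low-degree alignment; and any local configuration at the revealed neighbours not matching one of the listed templates (for example, two of $u$'s neighbours being joined by a short cycle rather than a single edge or triangle) likewise requires an additional alignment.

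The bookkeeping reduces to counting the required coincidences. Each of Types 2, 3b, 4 demands exactly one structural coincidence beyond the generic case --- respectively, a pair of $u$'s half-edges landing at the same vertex, an extra edge between the named neighbours $a,b$, or two specified degree-$3$ half-edges pairing with each other --- each of probability $O(\Delta_i/\nu)=O(\log^2\nu/\nu)=o(\nu^{-0.9})$. By contrast every bad hyperaction requires at least two such independent coincidences: Type 3c needs two rather than one extra edge at $\{a,b\}$; a fourth contraction needs a second triangle-like alignment; and the remaining bad structures are handled analogously by the case analysis above. Each bad family therefore has probability $O((\log^2\nu/\nu)^2)=O(\log^4\nu/\nu^2)=o(\nu^{-1.9})$, and there are only $O(1)$ such families to union-bound over. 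The main obstacle will be the $\Delta_i=3$ case, where both endpoints of the removed edge become deficient simultaneously and the contraction chains from $u$ and $v$ can interact --- merging into a single supervertex, creating new parallel edges, or spawning further deficient vertices at the merge point --- so one must verify exhaustively that each such interaction either matches one of Types 5, 33, 34 or contributes a genuinely independent second coincidence.
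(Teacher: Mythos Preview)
Your plan is correct and follows essentially the same route as the paper: an exhaustive case analysis on the local structure at $u,v$ (split by $\Delta_i>3$ versus $\Delta_i=3$), combined with the observation that bad hyperactions force two extra ``coincidental'' edges in a bounded neighbourhood of $v$ while Types~2, 3b, 4 force only one. The paper packages your coincidence-counting into a single auxiliary lemma bounding $\Pr(\cB_K(\Gamma_i,v,b))$, the probability that some subgraph on $a\le K$ vertices containing $v$ spans $a+b$ edges, by $o(\nu^{-0.9-b})$; this lets the case analysis reduce to the purely combinatorial task of checking that each off-list configuration witnesses $\cB_K(\Gamma_i,v,1)$ and each of Types~2, 3b, 4 (and also 5, 34) witnesses $\cB_K(\Gamma_i,v,0)$, which is cleaner than tracking conditional pairing probabilities step by step.
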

The fact that w.h.p. we perform one hyperaction out of a small set of possibilities helps in controlling $ex_\ell$. Namely we have  
\begin{lem}\label{drift}
Suppose that $\Gamma_i$ satisfies $ex_4(\G)\leq \log^2 |V(\G_i)|$.
Then either $ex_4(\G_i)=0$ or 
$$\ex(ex_4(\Gamma_{i+1})-ex_4(\G_i)|\G_i)\leq -\frac{1}{3}.$$
Moreover with probability $1-o(|V(\G_i)|^{-1.9})$ we have that 
$$|ex_4(\G_i)-ex_4(\G_{i+1})|\leq 2.$$
\end{lem}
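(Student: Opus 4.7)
The plan is to do a case analysis on the hyperaction applied to $\G_i$, using Lemma \ref{hyper} to restrict attention to the likely cases. Since $ex_4(\G_i) \geq 1$, $\G_i$ contains a vertex of degree at least $5$, so its maximum degree is at least $5$; in particular Hyperactions of Type 5, 33 and 34, which each require maximum degree equal to $3$, cannot occur. Write $\nu = |V(\G_i)|$; the only hyperactions that can be applied are of Type 1, 2, 3a, 3b, 3c, or 4.

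I first compute $\delta := ex_4(\G_{i+1}) - ex_4(\G_i)$ in each of the non-bad cases. Let $v$ denote the selected max-degree vertex and $u$ the other endpoint of the removed edge. Since $d_{\G_i}(v) \geq 5$, the max-edge removal decreases $v$'s contribution to $ex_4$ by exactly $1$. For Type 1 (requiring $d(u) \geq 4$), $\delta = -1$ if $d(u)=4$ and $\delta = -2$ if $d(u) \geq 5$. For Type 3a (requiring $d(u)=3$ and $\eta=0$, so $v_c$ has degree $d(x_1)+d(x_2)-2$), a direct comparison of excesses shows the contraction contributes $0$, $+1$, or $+2$ to $ex_4$ according to whether zero, one, or two of $d(x_1),d(x_2)$ exceed $3$, so $\delta \in \{-1,0,+1\}$. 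The same bookkeeping, using $d(v)\geq 5$ and the structural conditions in Section \ref{list}, gives $\delta\in\{0,+1\}$ for Type 2, $\delta\in\{-2,-1\}$ for Type 3b, and $\delta\in\{-1,0,+1\}$ for Type 4. The second claim of the lemma is now immediate: the event $|\delta|>2$ forces the hyperaction to be of Type 3c (or any other bad type), which by Lemma \ref{hyper} has probability $o(\nu^{-1.9})$.

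For the expected drop I use that conditional on $\G_i$ the configuration is uniform over pairings with no loops. Writing $f_d$ for the number of degree-$d$ vertices in $\G_i$ and $q_d = d f_d/(2|E(\G_i)|)$, the distribution of $d(u)$ equals $q_d$ up to an $O(\log^2 \nu/\nu)$ correction (appealing to the $O(\log^2\nu)$ bound on the maximum degree asserted elsewhere in the paper). In a Type 3a, the two remaining pairing partners of $u$ are, to leading order, independent uniform pairing points, so each of $x_1,x_2$ has degree at least $4$ with probability $p_{\geq 4} := 1 - q_3$ up to $O(1/\nu)$ corrections, and the contraction contribution to $ex_4$ has mean $2 p_{\geq 4}+o(1)$. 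Plugging in,
\begin{equation*}
\ex\bigl(\delta \;\big|\; \G_i,\; \text{Type 1 or 3a}\bigr) \leq q_3(-1+2p_{\geq 4}) + q_4(-1) + q_{\geq 5}(-2) + o(1) = 3q_3 + q_4 - 2 - 2q_3^2 + o(1),
\end{equation*}
whose maximum over $q_3,q_4\geq 0$ with $q_3+q_4\leq 1$ is $-1/2$, attained near $q_3=q_4=1/2$. Types 2, 3b, 4 together have probability $o(\nu^{-0.9})$ by Lemma \ref{hyper}, and since $|\delta|=O(\log^2 \nu)$ in these events their expected contribution is $o(\nu^{-0.9})\cdot O(\log^2\nu) = o(1)$. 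Type 3c is even rarer and contributes similarly. Combining gives $\ex(\delta\mid \G_i)\leq -1/2+o(1)\leq -1/3$ for all sufficiently large $\nu$.

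The main obstacle will be justifying the approximate independence and approximate uniformity of the successive pairing partners used above: strictly, these partners are jointly conditioned on the no-loop structure of $\G_i$, but the corrections arising from this conditioning and from excluding already-revealed pairing points are uniformly $O(\log^2\nu/\nu)$, and are absorbed by the $o(1)$ slack between the analytic maximum $-1/2$ and the target $-1/3$.
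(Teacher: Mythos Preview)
Your argument follows essentially the same route as the paper: restrict to good hyperactions via Lemma~\ref{hyper}, observe that Types 2, 3b, 4 are rare and have $|\delta|\le 2$, and then compute the drift from Types 1 and 3a by conditioning on $d(u)$ and (for 3a) on the degrees of the two contracted neighbours. Your final expression $3q_3+q_4-2-2q_3^2$, maximized at $-1/2$, agrees with the paper's $-(1-p_{3,i})-p_{3,i}^3+p_{3,i}(1-p_{3,i})^2$ (the two simplify to the same quadratic in $q_3$ once $q_4=1-q_3$).

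There is one genuine gap. Your entire analysis is carried out under the standing assumption $ex_4(\G_i)\ge 1$, which lets you rule out Types 5, 33, 34 at the outset because the chosen vertex $v$ then has degree $\ge 5$. This is fine for the drift claim, which is only asserted when $ex_4>0$. But the ``Moreover'' claim $|\delta|\le 2$ w.h.p.\ is stated (and later used, in Lemma~\ref{intervals}) without the hypothesis $ex_4>0$. When $ex_4(\G_i)=0$ the maximum degree may be $3$, and then the hyperaction can be of Type 5, 33 or 34; you have said nothing about $\delta$ in those cases. The paper handles this in one sentence: all vertices involved in Types 5, 33, 34 have degree $3$, a Type~5 creates a single vertex of degree $5$ (so $\delta=+1$), and Types 33, 34 leave $ex_4$ unchanged. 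You should add this, or at least note that the same ``a single contraction raises $ex_4$ by at most $2$'' observation covers these cases too.

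A smaller point: for the contribution of the bad (probability $o(\nu^{-1.9})$) event to the expectation, the right crude bound is $|\delta|\le 2e_i\le 5n_i$, not $O(\log^2\nu)$, since an unclassified hyperaction could in principle cascade through many vertices. This still yields an $o(1)$ contribution, so your conclusion is unaffected, but the justification you gave (``$|\delta|=O(\log^2\nu)$'') is not quite what you need there.
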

\begin{lem}\label{4inf}
Suppose that $\omega\geq \log n$. Then for every $i\in \{0,1,..,\tau\}$, $|V(\Gamma_i)|\geq \omega$ implies that with probability $1-O(\omega^{-0.8})$, we have $ex_4(\Gamma_{i})\leq \log^2 |V(\G_i)|$. Furthermore with probability $1-O(\omega^{-0.8})$ there exists $\G \in \{\Gamma_{0},\Gamma_{1},..,\Gamma_{\tau}\}$ that satisfies $\omega\leq |V(\G)|\leq 2\omega$ and $ex_4(\G)=0$.
\end{lem}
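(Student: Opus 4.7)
The plan is to combine Lemmas~\ref{hyper} and~\ref{drift} via a random-walk analysis of $ex_4(\Gamma_i)$. First I would introduce a good event $\cE$ on which every hyperaction with $|V(\Gamma_i)|\geq\omega$ is good (in the sense of Lemma~\ref{hyper}) and satisfies $|ex_4(\Gamma_{i+1})-ex_4(\Gamma_i)|\leq 2$. Because each hyperaction reduces $|V|$ by $O(1)$, there are only $O(1)$ indices with $|V(\Gamma_i)|=v$ for each $v\geq\omega$; summing the per-step failure bounds $o(v^{-1.9})$ from both lemmas over $v\geq\omega$ gives $\pr(\cE^c)=O(\omega^{-0.9})$. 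On $\cE$, Lemma~\ref{drift} then says that $ex_4$, viewed as a process indexed by hyperactions, is a supermartingale with drift at most $-1/3$ whenever positive and with jumps bounded by $2$.

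For the first statement, note that $ex_4(\Gamma_0)=0$ since $d(v)\leq 4$ for every vertex $v$. A standard gambler's-ruin estimate (proved by showing that $e^{\lambda ex_4(\Gamma_i)}$ is a supermartingale for some small constant $\lambda>0$ using the drift and jump bounds) implies that any excursion of $ex_4$ from $0$ reaches level $L$ with probability at most $Ce^{-\alpha L}$ for absolute constants $C,\alpha>0$. Partition the indices with $|V(\Gamma_i)|\geq\omega$ into scale windows $W_k=\{i:2^k\omega\leq|V(\Gamma_i)|<2^{k+1}\omega\}$, each of size $|W_k|=O(2^k\omega)$. Inside $W_k$ the target level $\log^2|V(\Gamma_i)|$ is at least $\log^2(2^k\omega)$, so a union bound over at most $|W_k|$ excursions starting in $W_k$ gives
\begin{equation*}
\pr\bigl(\exists\, i\in W_k:\, ex_4(\Gamma_i)>\log^2|V(\Gamma_i)|\bigr)\leq O(2^k\omega)\cdot e^{-\alpha\log^2(2^k\omega)}.
\end{equation*}
Writing $m=\log(2^k\omega)\geq\log\omega$, the right-hand side is $O(e^{m-\alpha m^2})$, which for $\omega\geq\log n$ with $n$ large enough is $O((2^k\omega)^{-0.8})$. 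Summing the resulting geometric series over $k\geq 0$ yields $O(\omega^{-0.8})$ overall.

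For the second statement, consider the scale-zero window $W_0=\{i:\omega\leq|V(\Gamma_i)|\leq 2\omega\}$, which has $|W_0|\geq c_1\omega$ for some absolute constant $c_1>0$. Let $\tau$ be the first $i\geq\min W_0$ with $ex_4(\Gamma_i)=0$; the goal is to show $\tau\leq\max W_0$. Working on $\cE$ and on the first-statement event (so that $ex_4\leq\log^2(2\omega)$ throughout $W_0$), if $\tau>\max W_0$ then the compensated stopped process $Y_i=ex_4(\Gamma_{i\wedge\tau})+(i\wedge\tau)/3$ is a supermartingale with increments bounded by $7/3$, and
\begin{equation*}
Y_{\max W_0}-Y_{\min W_0}\geq |W_0|/3-\log^2(2\omega)=\Omega(\omega)
\end{equation*}
for $\omega\geq\log n$. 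Azuma's inequality then gives $\pr(\tau>\max W_0)\leq\exp(-\Omega(\omega))\leq\exp(-\Omega(\log n))=o(\omega^{-0.8})$. Combining this with the failure probabilities of $\cE$ and of the first statement yields the claimed $O(\omega^{-0.8})$ bound.

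The main obstacle I expect is verifying the gambler's-ruin estimate carefully: $ex_4(\Gamma_i)$ is not itself a Markov chain, since the conditional distribution of $ex_4(\Gamma_{i+1})-ex_4(\Gamma_i)$ depends on the whole graph $\Gamma_i$, not just on its excess. However, Lemma~\ref{drift} provides a uniform conditional drift of at most $-1/3$ and (on $\cE$) a uniform conditional jump bound of $2$, and this is exactly what is needed to verify, one step at a time, that $e^{\lambda ex_4(\Gamma_i)}$ is a supermartingale for an appropriate $\lambda>0$. Once that is in hand, the rest is routine union-bound bookkeeping across the scale windows.
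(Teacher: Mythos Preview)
Your overall plan is sound and close in spirit to the paper's, but there is a circularity you have not addressed. Both Lemma~\ref{hyper} and Lemma~\ref{drift} deliver their conclusions --- the $o(|V(\Gamma_i)|^{-1.9})$ failure probability, the $-1/3$ drift, and the jump bound of~$2$ --- only under the standing hypothesis $ex_4(\Gamma_i)\leq\log^2|V(\Gamma_i)|$. So you cannot bound $\pr(\cE^c)$ by simply summing these per-step bounds, nor can you assert that on $\cE$ the process has drift at most $-1/3$, without already knowing that $ex_4\leq\log^2|V|$ throughout --- which is precisely the first statement you are trying to prove.

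The paper breaks this loop by working excursion-by-excursion (its Lemma~\ref{intervals}). Starting from a time $h$ with $ex_{4,h}=0$, it observes that if the next $0.25\log^2 n_h$ hyperactions are all good (so each jump is at most~$2$), then $ex_{4,h+j}\leq 0.5\log^2 n_h\leq\log^2 n_{h+j}$ holds \emph{deterministically} for all $j\leq 0.25\log^2 n_h$. This validates the hypotheses of Lemmas~\ref{hyper} and~\ref{drift} step by step, so the failure probabilities at each step are genuinely $o(n_h^{-1.9})$ and may be summed. Hoeffding then shows the excursion returns to~$0$ within those $0.25\log^2 n_h$ steps with probability $1-o(n_h^{-1.8})$, and chaining the zeros gives both parts of the lemma. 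Your exponential-supermartingale/Azuma machinery can be made to work, but you must insert a stopping time at the first moment $ex_4$ exceeds $\log^2|V|$ (or else adopt the excursion viewpoint) so that the hypotheses of Lemmas~\ref{hyper} and~\ref{drift} are in force wherever you invoke them.

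A smaller point: the claim that there are only $O(1)$ indices $i$ with $|V(\Gamma_i)|=v$ is false, since a Type~1 hyperaction (a lone max-edge removal) does not change $|V|$ at all. What does strictly decrease is $|E(\Gamma_i)|$; the paper uses this, together with $|V(\Gamma_{i_j})|\geq|E(\Gamma_{i_j})|/2$ at the zeros (where $\Delta\leq 4$), to convert the sum of $o(n_{i_j}^{-1.8})$ into a sum over distinct edge-counts and hence obtain the $O(\omega^{-0.8})$ bound.
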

Lemma  \ref{hyper} states that as long as $ex_\ell$ stays small then w.h.p. we perform only specific hyperactions. Lemmas \ref{drift} and \ref{4inf} imply that as long we perform only those hyperactions then w.h.p. $ex_4$ stays small. Furthermore, as none of those hyperactions consist of a vertex-0 removal or a bad contraction making we can appeal to Lemma \ref{correction}. We use the above three lemmas to prove the following:
\begin{lem}\label{reduction}
Let $\Gamma_h$ be as promised by Lemma \ref{4inf} that is, $\G_h \in \{\Gamma_{0},\Gamma_{1},..,\Gamma_{\tau}\}$ and it satisfies $\log n\leq \omega \leq V(\G_h)\leq 2\omega$ and $ex_4(\G)=0$. Then with probability $1-O(\omega^{-0.8})$, for any matching $M_h$ of $\Gamma_h$, the matching $M$ generated by {\sc construct} with $j=h$, $M_h$ as an input satisfies
\begin{equation}\label{kappa}
\kappa(G,M)=\kappa(\Gamma_h,M_h).
\end{equation} 
\end{lem}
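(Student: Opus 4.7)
The plan is to reduce the claim to showing that no vertex-0 removals and no bad contractions occur during \textsc{reduce} up to $\Gamma_h$, and then to chain Lemmas~\ref{4inf} and \ref{hyper} via a union bound.

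Let $j_h$ be the index in the un-grouped sequence $G_0,G_1,\ldots,G_{\hat{\tau}}$ with $G_{j_h}=\Gamma_h$. Applying Lemma~\ref{correction} with $j=j_h$ yields
\begin{equation*}
\kappa(G,M) \;=\; R_0(G,j_h) \,+\, R_{2b}(G,j_h) \,+\, \kappa(\Gamma_h,M_h),
\end{equation*}
so \eqref{kappa} is equivalent to the assertion that $R_0(G,j_h)=R_{2b}(G,j_h)=0$ with probability $1-O(\omega^{-0.8})$. Inspecting the eight good Hyperaction types 1, 2, 3a, 3b, 4, 5, 33, 34 shows that each consists only of a max-edge removal, vertex-1 removals, good contractions, and (for Type 2) an auto-correction contraction; none contains a vertex-0 removal or a bad contraction. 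It therefore suffices to prove that every hyperaction performed between $\Gamma_0$ and $\Gamma_h$ is good with the required probability.

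Next I would invoke Lemma~\ref{4inf} to obtain, on an event $\cE_1$ of probability $1-O(\omega^{-0.8})$, both the target $\Gamma_h$ with $\omega\leq|V(\Gamma_h)|\leq 2\omega$ and $ex_4(\Gamma_h)=0$, together with the uniform bound $ex_4(\Gamma_i)\leq \log^2|V(\Gamma_i)|$ for every $i$ with $|V(\Gamma_i)|\geq \omega$. Since the vertex count is non-increasing through \textsc{reduce}, this bound applies to all $i\leq h$. On $\cE_1$, Lemma~\ref{hyper} with $\ell=4$ then bounds the probability that the hyperaction from $\Gamma_i$ to $\Gamma_{i+1}$ is bad by $o(|V(\Gamma_i)|^{-1.9})$.

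The hard part is the ensuing union bound, since $h$ itself can be as large as $\Theta(n)$ and the blanket estimate $h\cdot o(\omega^{-1.9})$ is not small enough. My plan is to group steps by $k=|V(\Gamma_i)|$: every good hyperaction other than Type~1 reduces the vertex count by at least two, so the number $H_k$ of hyperactions at vertex count $k$ is one plus the number of consecutive Type~1 operations performed at that level. Each such Type~1 strips an edge from a vertex of degree at least $4$, and on $\cE_1$ both the maximum degree and the supply of such high-degree vertices are tightly controlled (via $ex_4$ and the polylogarithmic degree bound underlying Lemma~\ref{hyper}), yielding $H_k=k^{o(1)}$. The resulting sum
\begin{equation*}
\sum_{i=0}^{h-1} o(|V(\Gamma_i)|^{-1.9}) \;=\; \sum_{k\geq\omega} H_k\cdot o(k^{-1.9}) \;=\; O\bigl(\omega^{-0.9+o(1)}\bigr)
\end{equation*}
is absorbed into the $O(\omega^{-0.8})$ failure probability of $\cE_1$. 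Combining the two events with Lemma~\ref{correction} then gives \eqref{kappa} with probability $1-O(\omega^{-0.8})$.
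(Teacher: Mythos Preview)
Your overall structure---reduce \eqref{kappa} via Lemma~\ref{correction} to the absence of vertex-0 removals and bad contractions, then combine Lemmas~\ref{4inf} and \ref{hyper} through a union bound over hyperactions---matches the paper's proof exactly. The gap is in the last step, your bound $H_k=k^{o(1)}$.

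The quantity $ex_4$ does \emph{not} control vertices of degree exactly $4$: a vertex with $d(v)=4$ contributes zero to $ex_4(G)=\sum_v (d(v)-4)\mathbb I(d(v)>4)$. A Type~1 hyperaction between two degree-$4$ vertices leaves $ex_4$ unchanged, so the $ex_4\le\log^2 k$ bound puts no restriction on how many such steps occur at level $k$. Concretely, if $\Gamma_i$ has $n_4$ vertices of degree $4$ (which can be $\Theta(k)$, consistent with $ex_4=0$), then up to $n_4/2$ consecutive Type~1 hyperactions can take place before the vertex count moves. So the correct general bound is only $H_k=O(k)$, and then $\sum_{k\ge\omega}H_k\cdot o(k^{-1.9})$ is of order $\sum_{k\ge\omega}k^{-0.9}$, which diverges.

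The paper sidesteps this by parametrizing the union bound by the \emph{edge} count rather than the vertex count. Every hyperaction (good or bad) removes at least one edge, so the integers $|E(\Gamma_i)|$ are strictly decreasing; in particular each value of $m=|E(\Gamma_i)|$ occurs at most once. On $\cE_1$ the degree sequence satisfies $3\le d(v)$ and $2|E(\Gamma_i)|\le 4|V(\Gamma_i)|+ex_4(\Gamma_i)\le 5|V(\Gamma_i)|$, so $|V(\Gamma_i)|=\Theta(|E(\Gamma_i)|)$ and hence
\[
\sum_{i<h} o\!\left(|V(\Gamma_i)|^{-1.9}\right)\;\le\;\sum_{m\ge |E(\Gamma_h)|} o\!\left(m^{-1.9}\right)\;=\;o\!\left(\omega^{-0.9}\right),
\]
since $|E(\Gamma_h)|\ge \tfrac32|V(\Gamma_h)|\ge \tfrac32\omega$. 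Replacing your $H_k$ argument with this edge-count reindexing completes the proof.
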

\begin{proof}
For $i<h$, Lemma \ref{4inf} states that w.h.p. $ex_4(\Gamma_{i})\leq \log^2 |V(\G_i)|$. Lemma \ref{hyper} then implies that for $i<h$ with probability 
$$1-\sum_{i=1}^{h}  o(|V(\G_i)|^{-1.9})\geq 1-\sum_{i=1}^{h}  o((2|E(\G_i)|/3)^{-1.9}) \geq 1- \sum_{i=\omega}^{\infty}  o(2i^{-1.9}/3) =1-o(\omega^{-0.9})$$
only good hyperactions take place. For the first inequality we used the fact that $\G_i$ has minimum degree 3. For the second we used the fact that any Hyperaction decreases the number of edges, hence $|E(\G_i)|$ is decreasing with respect to $i$.

Hence w.h.p., until {\sc reduce} generates $\Gamma_h$ it performs no vertex-0 removals or bad contractions.
Let $\hat{h} \in \{0,1,,...,\hat{\tau}\}$ be such that $G_{\hat{h}}=\G_h$.
Then from the above $R_0(G,\hat{h})=R_{2b}(0,\hat{h})=0$.
 Finally in this case, Lemma \ref{correction} implies that 
$$\kappa(\G,M_h)=\kappa(G_{\hat{h}},M_h)=\kappa(\Gamma_h,M_h).$$
\end{proof}
Observe that by  a straightforward application of Lemma \ref{reduction} with $\omega=\log n$ and $M_h$ being the matching produced by {\sc construct} in the case $j=\widehat{\tau}$ we have that the matching $M$ output by the {\sc reduce-construct} algorithm satisfies
 $$\kappa(G,M)=\kappa(\Gamma_h,h) \leq |V(\Gamma_h)| \leq 2\log n.$$
This proves Part (a) of Theorem \ref{main}.
\subsection{Finding a perfect matching in linear time} 
Lemma \ref{reduction} suggests that when we run {\sc construct} we should finish our algorithm by finding a maximum matching $M'$ of $\Gamma_h$. The final ingredient for the proof of Theorem \ref{main}(b) is given by the next Theorem which implies that $\Gamma_h$ has a parfect matching w.h.p.
\\
\begin{thm}\label{34match}
Let $\bd$ be a degree sequence where $3\leq d(i)\leq 4$ for all $i$. Let $G$ be a random graph with degree sequence $\bd$ and no loops. Then $G$ has a (near)-perfect matching with probability $1-O(n^{-3/4})$. 
\end{thm}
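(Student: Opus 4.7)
My approach uses the Tutte--Berge formula: $G$ has a near-perfect matching if and only if no set $S\subseteq V$ satisfies $q \geq |S|+2$, where $q$ denotes the number of odd components of $G-S$. I will bound the probability that such a \emph{bad} $S$ exists by $O(n^{-3/4})$, working in the configuration model $G_\phi$ conditioned on no loops. First I dispose of easy structural prerequisites by standard first-moment arguments in the configuration model: with probability $1-O(n^{-3/4})$ the graph $G$ is connected and contains no subgraph on $k\leq c\log n$ vertices with at least $k+2$ edges, since each excess edge beyond a tree contributes a factor $\Theta(1/n)$ to the expected count. A similar first-moment argument yields the vertex-expansion statement $|N(T)\setminus T|\geq|T|$ for every $T\subseteq V$ with $|T|\leq n/2$.

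Next I analyze the structure of a bad $S$ of minimum size. Write $s=|S|$, $C_1,\dots,C_q$ for the odd components of $G-S$, and $a_v$ for the number of distinct odd components adjacent to $v\in S$. A short case analysis on $a_v\in\{0,1,2,3,4\}$, computing the change in $q-s$ when $v$ is removed from $S$, shows that $a_v\leq 2$ would leave $S\setminus\{v\}$ still bad, contradicting minimality; hence $a_v\geq 3$ for every $v\in S$. The same case analysis applied to $a_v\geq 3$ gives $q-s\leq 3$. Combined with $q\geq s+2$, this pins down $q-s\in\{2,3\}$, and $q\leq\sum_v a_v\leq 4s$ forces $s\geq 1$. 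The bad structure $U=S\cup\bigcup_i C_i$ thus has at most $s+3$ odd components attached to $S$ by at least $3s$ edge-incidences.

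I split the probability estimate on the size $s$. For $s\leq\log n$, min-degree $3$ together with the preliminary step forces each $C_i$ to have $O(1)$ vertices, since a large odd component would itself be a subgraph on $O(\log n)$ vertices with edge-excess $\geq 2$, which has been ruled out. Hence $|U|=O(s)$. The induced subgraph on $U$ has a large edge-excess over a spanning tree (the $\geq 3s$ crossing incidences are absorbed into only $\sim s$ pieces), and summing the standard configuration-model subgraph-count estimate over $s=1,\dots,\log n$ and over partition shapes of $(|C_i|)$ gives $O(n^{-3/4})$. For $\log n<s\leq n/2$ I invoke expansion: setting $T=\bigcup_i C_i$, every edge out of $T$ lands in $S$, so $|N(T)\setminus T|\leq s<|T|$, contradicting expansion. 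The case $s>n/2$ reduces to $s\leq n/2$ by swapping $S$ with $V\setminus S$ and using the parity relation $q-s\equiv n\pmod 2$.

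The main obstacle is making the small-$s$ count sharp enough to deliver $O(n^{-3/4})$ rather than a looser polynomial bound. This requires exploiting the strong $a_v\geq 3$ constraint to lower-bound the edge-excess of $U$ in terms of $s$ and the component-size profile, and verifying that the dominant contribution arises from one specific small local structure whose expected count is $\Theta(n^{-3/4})$ in the configuration model; the remaining contributions must all be absorbed into this rate.
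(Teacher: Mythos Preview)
Your structural observations about a minimal bad $S$ (each $v\in S$ adjacent to at least three odd components, hence $q-s\in\{2,3\}$) are correct, and the paper uses the first of these. But two of your probabilistic inputs fail. First, the vertex-expansion claim $|N(T)\setminus T|\geq|T|$ for all $|T|\leq n/2$ is simply false in random cubic graphs: already for a \emph{typical} $T$ of size $n/2$, a positive fraction of vertices in $V\setminus T$ have all three neighbours inside $V\setminus T$, so $|N(T)\setminus T|<|T|$; for one side of a near-minimum bisection the gap is much larger. No routine first-moment bound can beat the $\binom{n}{n/2}$ choices of $T$ here. Even granting expansion, you apply it to $T=\bigcup_i C_i$, and nothing prevents $|T|>n/2$ throughout your middle range $\log n<s\leq n/2$. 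Second, in the small-$s$ case you assert that each odd component $C_i$ has $O(1)$ vertices via the no-dense-small-subgraph preliminary; but that preliminary only constrains subgraphs on at most $c\log n$ vertices and says nothing about a component of larger size. For $s\leq\log n$ one of the $C_i$ is typically the giant component of $G-S$, of order $n$, so $|U|$ is not $O(s)$ and your subgraph count does not apply.

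The paper avoids both issues by never invoking a global expansion lemma. For small $|W|$ it splits on whether the largest component $C_z$ of $G-W$ exceeds $0.997n$: if not, $V\setminus W$ admits a balanced bipartition $(V_1,V_2)$ with no $V_1$--$V_2$ edges, which is exponentially unlikely by a direct configuration-model count; if so, the small remainder $V_2=V\setminus(C_z\cup W)$ must send at least $2|W|$ edges into $W$ (this is where the $a_v\geq 3$ fact is used), and this event is again counted directly. Large $|W|$ is handled by a separate entropy calculation exploiting that a \emph{maximal} bad $W$ leaves no nontrivial tree components in $G-W$. Both parts are explicit pairing counts rather than structural expansion lemmas, and it is the small-$|W|$, giant-component case that produces the $O(n^{-3/4})$ rate.
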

We defer the proof of Theorem \ref{34match} to Section \ref{Proofs Matchings}. To prove Part (b) of Theorem \ref{main}, we run algorithm {\em Reduce} until we find $\Gamma_h$ as promised in Lemma \ref{4inf}. Here we take $\omega=n^{2/3}$. We know from Lemma \ref{reduction} and Lemma \ref{34match} that $\Gamma_h$ has a (near)-perfect matching with probability $1-O(\omega^{-0.8})-O(\omega^{-3/4})=1-O(n^{-1/2})$ that can be expanded to a (near)-perfect matching of the original graph $G$. Moreover since $|V(\G_h)|\leq 2\omega$, such a matching can be found in $O(\omega^{3/2})=O(n)$ time. The rest of the algorithm can be executed in $O(n)$ time and this completes the high probability part of the proof of Part (b). Also, if {\sc reduce-construct} fails, then we can resort to an $O(n^{3/2})$ time algorithm. This only happens with probability $O(n^{-1/2})$ and this yields the claim in Part (b) about the expected running time. 
\section{Proofs of Lemmas \ref{hyper}, \ref{drift} and \ref{4inf}}\label{LemmasDegree}
\begin{notn}
We sometimes write $A\leo B$ in place of $A=O(B)$ for aesthetic purposes.
\end{notn}
\begin{notn}
Let $K$ be an arbitrary positive integer and $b\in \{0,1\}$. For a random graph $G$ and $v\in V(G)$, let $\cB_K(G,v,b)$ be the event that $G$ spans a subgraph that contains $v$, spans $a\leq K$ vertices and $a+\ell$ edges.
\end{notn}
We show that {\sc reduce} either performs one of the good hyperactions given in Section \ref{list} or $\cB_K(G,v,1)$ occurs where $v$ is the vertex of maximum degree chosen by {\sc reduce}. We will also use the following standard notation: For a graph $G$, $j,\ell \in \mathbb{N}$ we let:
\begin{itemize}
\item $n(G):=|V(G)|$, $e(G):=|E(G)$,
\vspace{-2mm} 
\item $\delta(G)$ and $\Delta(G)$ be the minimum and maximum degree of $G$ respectively, 
\vspace{-2mm}
\item $n_j(G)$ be the number of vertices of $G$ of degree $j$,
\vspace{-2mm}
\item $p_j(G):=\frac{jn_j(G)}{2|E(G)|}$ and $p_{>j}(G):=\sum_{h>j} p_h(G)$,
\vspace{-2mm}
\item $ ex_\ell(G):=\sum_{v \in V(G)} [d(v)-\ell]\mathbb{I}(d(v)>\ell)$, 
\end{itemize}
We denote by $n_i,e_i, n_{j,i},p_{j,i},p_{>j,i}$ and $ex_{\ell,i}$ the corresponding quantities in relation to $\G_i$.  
\begin{lem}\label{dence}
Let $K$ be an arbitrary fixed positive integer. Let $\bd$ be a degree sequence of length $n$ that satisfies $ex_\ell(G)\leq  \log^2 n$ for some $3\leq \ell=O(1)$. Let $G$ be a random graph with degree sequence $\bd$ and no loops. Let $b \in \{0,1\}$, then $\pr(\cB_K(G,v,b))=o(n^{-0.9-b})$.
\end{lem}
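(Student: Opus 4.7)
The plan is to prove the bound by a first-moment argument in the configuration model. The main leverage comes from the hypothesis $ex_\ell(G)\leq\log^2 n$ with $\ell=O(1)$, which forces the maximum degree to satisfy $\Delta\leq\ell+\log^2 n=O(\log^2 n)$, so every degree factor appearing below is only polylogarithmic in $n$; in the relevant regime we also have $2\nu=\Theta(n)$.

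I first dispose of the conditioning on no loops. In the unconditioned configuration model the expected number of loops equals $\sum_u d(u)(d(u)-1)/(2(2\nu-1))$. Splitting the numerator at $d(u)=\ell$, the low-degree piece contributes $O(n\ell^2)=O(n)$, while the high-degree piece has at most $ex_\ell(G)\leq\log^2 n$ contributors each of size at most $\Delta^2$, for a total of $O(\log^6 n)$. Dividing by $\Theta(n)$ shows the expected number of loops is $O(1)$, and a Poisson-type comparison then gives $\pr(\text{no loops})=\Omega(1)$. Hence the conditioning inflates any probability by at most a constant factor, which is absorbed into the $o(\cdot)$.

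The heart of the proof is a union bound over candidate dense substructures. Such a substructure is a pair $(S,H)$ where $S\ni v$ has $|S|=a\leq K$ and $H$ is a multigraph on $S$ with exactly $a+b$ edges; there are at most $n^{a-1}$ choices for $S$ and $O_K(1)$ choices for $H$. For a fixed $H$ with degree sequence $(d_H(u))_{u\in S}$, a standard configuration-model estimate yields
$$\pr(H\subseteq G_\f)\;\leq\;\frac{\prod_{u\in S}d(u)^{d_H(u)}}{(2\nu-2(a+b))^{a+b}}\;\leq\;\frac{\Delta^{2(a+b)}}{(2\nu)^{a+b}(1-o(1))},$$
obtained by choosing one configuration point per edge-endpoint (at most $\Delta^{\sum d_H(u)}=\Delta^{2(a+b)}$ ways) and using that a prescribed partial matching of $2(a+b)$ points appears in a uniform pairing of $W$ with probability at most $(2\nu-2(a+b))^{-(a+b)}$. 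Multiplying through and summing,
$$\sum_{a=2}^{K}n^{a-1}\cdot O_K(1)\cdot\frac{\Delta^{2(a+b)}}{(2\nu)^{a+b}}\;=\;O\!\left(\frac{(\log n)^{O(K)}}{n^{b+1}}\right)\;=\;o(n^{-0.9-b}).$$
The case $a=1$ is automatically excluded by the no-loops conditioning, since then $H$ would have to be a loop.

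The only step that requires any real care is the preliminary loop estimate, since $\Delta$ can be as large as $\log^2 n$; but the $ex_\ell$ bound controls the high-degree contribution to $\sum_u d(u)(d(u)-1)$ by $O(\log^6 n)=o(\nu)$, which is exactly what is needed. The remainder is a routine first-moment calculation.
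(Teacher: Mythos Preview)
Your argument is correct and follows essentially the same route as the paper: both dispose of the no-loops conditioning by noting it only costs a constant factor, then run a first-moment/union bound in the configuration model, using that $\Delta\leq\ell+\log^2 n$ and $2\nu=\Theta(n)$ to get $\sum_{a=2}^{K} n^{a-1}\Delta^{2(a+b)}(2\nu)^{-(a+b)}=o(n^{-0.9-b})$. The only cosmetic difference is that the paper absorbs the $O_K(1)$ count of multigraph structures $H$ into a single factor $(\Delta a)^{2(a+b)}$ rather than separating the choice of $H$ from the choice of configuration points.
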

\begin{proof}
Let $G$ be a random graph with degree sequence $\bd$. The fact that $ex_\ell(G)\leq   \log^2 n$ implies that $G$ has no loops with probability bounded below by a positive constant (see for example \cite{FK}). Hence the condition of having no loops can be ignored in the proof that events have probability $o(1)$. Also it implies that $\Delta=\Delta(G)\leq \ell+ ex_\ell(G) \leq \ell+\log^2 n$.

Let $2m=\sum_{i=1}^n d(i)\leq \ell n+ex_{\ell}(G) =\Theta(n)$. Then for vertex $v$ and for $b=0,1$ the probability that $G$ spans a subgraph that covers $v$, spans $a\leq K$ vertices and $a+b$ edges can be bounded above by 
\begin{align*}
&\leo \sum_{a=2}^{K} \binom{n}{a-1} (\Delta a)^{2(a+b)} \frac{(2m-2(a+b))!}{2^{m-(a+b)}[m-(a+b)]!}\times \frac{2^{m}m!}{(2m)!}\\ 
&\leo \sum_{a=2}^{K} n^{a-1} \Delta^{2(a+b)} \frac{m(m-1)...(m-(a+b)+1)}{2m(2m-1)...(2m-2(a+b)+1)}\\ 
&\leo \sum_{a=2}^{K}  n^{a-1}  \Delta^{2(a+b)}  m^{-(a+b)}\\
&=o(n^{-0.9-b}).
\end{align*}
\end{proof}
We will drop the subscript $K$ from $\cB$. Taking $K=20$ will easily suffice for the rest of the proof.  Thus $\cB(G,v,b)=\cB_{20}(G,v,b)$.
\subsection{Proof of Lemma \ref{hyper}}
Let $v$ be the vertex of maximum degree chosen by {\sc reduce} and let $u$ be the vertex adjacent to $v$ such that $\{u,v\}$ is chosen for removal. We will show that if $\cB(\G_i,v,1)$ does not occur then {\sc reduce} performs one of the hyperactions given in Section \ref{list}.
Also observe that if a Hyperaction of Type 2,3b,4,5 or 34 occurs then $\cB(\G_i,v,0)$ occurs. Lemma \ref{dence} states that $\pr(\cB(\G_i,v,0))=o(|V(\G_i)|^{-0.9})$ thus proving the second part of Lemma \ref{hyper}. Also note that if  a Hyperaction of Type 3c, occurs, corresponding to a bad Hyperaction, then $\cB(\G_i,v,1)$ occurs. Lemma \ref{dence} states that $\pr(\cB(\G_i,v,1))=o(|V(\G_i)|^{-1.9})$.

{\bf Case A: $d(v) \geq 4$.}\\
If $d(u)\geq 4$ then a  hyperaction of Type 1 is performed.  
Thus assume  $d(u)=3$ and consider the cases where $|N(u)|=1,2,3$, (recall that we allow parallel edges but not self-loops). 

{\textbf{Case A1: $|N(u)|=1$}}.\\
$u$ is connected to $v$ by 3 parallel edges and so $\cB(\G_i,v,1)$ occurs.	

{\textbf{Case A2: $|N(u)|=2$}}.\\
Let $N(u)=\{v,u'\}$ and $S=\set{u,u',v}$. Let $T=(N(u')\cup N(v) )\setminus S$. If $|T|\leq 2$ then we have $d(S)\geq 10$ and either $S$ spans more than 3 edges or $S \cup T$ spans at least 7 edges. In both cases  $\cB(\G_i,v,1)$ occurs. Assume then that  $|T|\geq 3$. Now exactly one of $\set{u,u'}$, $\set{u,v}$ is repeated, else $\cB(\G_i,v,1)$ occurs.  If $\set{u,u'}$ is repeated then we perform an auto correction contraction resulting to a Hyperaction of Type 2 . If $\set{u,v}$ is repeated then we contract the remaining path $(u',u,v)$. Hence we have performed a Hyperaction of Type 3b.

{\textbf{Case A3: $|N(u)|=3$.} }\\
Let $N(u)=\set{v,x_1,x_2}$ and $T=(N(x_1)\cup N(x_2))\setminus\{u,x_1,x_2\}$. 

{\textbf{Sub-case A3a: $|T|\leq 1$}. \\
$\{u,x_1,x_2\} cup T$ spans at least $(d(u)-1+d(x_1)+d(x_2)+|T|)/2 \geq 4+|T|/2$ edges and the event $\cB(\G_i,v,1)$ occurs.

{\textbf{Sub-case A3b: $T=\{w_1,w_2\}$}}. \\
If  $v$ is at distance less than 6 from $\{u\} \cup N(u)$ in $\G\setminus \{ u,v\}$ then  $\cB(\G_i,v,1)$ occurs. To see this consider the subgraph $H$ spanned by $\{u,v,x_1,x_2,w_1,w_2,y\}$ and the vertices on the shortest path $P$ from $v$ to $u$ in $\G_i\setminus \{ u,v\}$. Here $y$ is the neighbor of $v$ on $P$. It must contain at least two distinct cycles. One contained in $\set{u,x_1,x_2,w_1,w_2}$ and $u,v,P$. If there is no edge from $x_1$ to $x_2$ then $\{v,u,x_1,x_2,w_1,w_2\}$ spans at least 7 edges and so $\cB(\G_i,v,1)$ occurs.

Thus we may assume that $N(x_1)=\{u,x_2,w_1\}$, $N(x_2)=\{u,x_1,w_2\}$ and $v\notin \{w_1,w_2\}\cup N(w_1) \cup N(w_2)$. We may also assume that $\set{w_1,w_2}$ is not an edge of $\G$, for otherwise $\{u,x_1,x_2,w_1,w_2\}$ contains two distinct cycles and $\cB(G_i,v,1)$ occurs. The algorithm {\sc Reduce} proceeds by contracting $u,x_1,x_2$ into a single vertex $x'$.  $x'$ has degree 2 and the algorithm proceeds by performing a contraction of $x',w_1,w_2$ into a new vertex $w'$. Let $S=N\{w_1,w_2\}\setminus \{x_1,x_2\}$. If $|S|\leq 3$ then $\cB(\G_i,u,1)$ occurs. To see this observe that $w_1,w_2$ must then have a common neighbor $w_3$ say. Consider the subgraph $H$ spanned by $\{u,x_1,x_2,w_1,w_2,w_3\}$. $H$ contains at least 7 edges and 6 vertices. If $|S|\geq 4$ then the new vertex has degree 4 and the sequence of actions taken by {\sc reduce} corresponds to a hyperaction of Type 4.
\vspace{3mm}
\\{\textbf{Sub-case A3c: $|T| \geq 3$.}}
After the removal of $\{v,u\}$ we contract $\{u,x_1,x_2\}$ into a single vertex of degree at least 3, hence a hyperaction of Type 3 is performed.
\vspace{3mm}
\\
{\bf Case B: $d(v)=d(u)=3$.}\\
Let $\G'=\G_i\setminus \{e\}$ where $e=\{v,u\}$.
\vspace{3mm}
\\{\textbf{Case B1: In $\G'$, $u$ and $v$ are at distance at least 4.}}\\ 
If $|N(N (u))|$ and $|N(N(v))| \leq 3$ then $\cB(\G_i,v,1)$ occurs. Thus we can assume that either $|N(N (u))|=4$ and/or $|N(N(v))|=4$. If both $|N(N (u))|,|N(N(v))|=4$ then {\em Reduce} will perform 2 good contractions and this amounts to a hyperaction of Type 33. Assume then that $|N(N (u))|=4$ and that $|N(N (v))|\leq 3$. If $|N(N (v))|=3$ then again {\em Reduce} will perform 2 good contractions amounting to a hyperaction of Type 33. If $|N(N (v))|=2$ and so $v$ is in a triangle then {\em Reduce} will perform a hyperaction of Type 34. Finally, if $|N(N (v))|=1$ then $\cB(\G_i,v,1)$ occurs.

{\textbf{Case B2: In $\G'$, $u$ and $v$ are at distance 3.}}\\
In $\G_i$ there is a cycle $C$ of length 4 containing $u,v$. If in $\G'$ we find that $|N(N (u))|\leq 3$ or $ |N(N(v))| \leq 3$ or $|N(u)\cap N(N(v))|>1$ or $|N(v)\cap N(N(u))|>1$ then $\cB(\G_i,v,1)$ occurs. This is because the graph spanned by $\{u,v\} \cup N(u)\cup N(v) \cup N(N(u) \cup N(N(v))$ in $\G_i$ will contain a cycle distinct from $C$. Assume this is not the case. Then after the max-edge removal of $\{u,v\}$ we have a contraction of $\{u\} \cup N(u)$ followed by a contraction of $\{u\} \cup N(u)$. Observe that neither contraction reduces the size of $N(N(u))$ or $N(N(v))$. Thus {\sc reduce} performs a hyperaction of Type 33.
\vspace{3mm}
\\{\textbf{Case B3: In $\G'$, $u$ and $v$ are at distance 2.}\\
In the case that $u,v$ have 2 common neighbors in $\G'$ we see that $\cB(\G_i,v,1)$ occurs. Assume then that they have a single common neighbor $x_1$. Let $z,x_2$ be the other neighbors of $u,v$ respectively. Then either $\cB(\G_i,v,1)$ occurs or {\sc reduce} performs a hyperaction of Type 5.
\vspace{3mm}
\\{\textbf{Case B4: In $\G'$, $u$ and $v$ are at distance 1}.\\
So here we have that $\set{u,v}$ is a double edge in $\G_i$. Let $x,y$ be the other neighbors of $u,v$ repectively in $\G$. Assuming that $\cB(\G_i,v,1)$ does not occur, {\sc reduce} performs a max-edge removal followed by a single good contraction and this will be equivalent to a hyperaction of Type 3, involving the contraction of one of $x,u,v$ or $u,v,y$.
\qed
\subsection{Proof of Lemma \ref{drift}}
The inequality $ex_{4,i} \leq \log^2 n_i$ implies that $p_{j,i}\leq \log^2n_i/2e_i =o(n_i^{-0.95})$ for $5\leq j \leq \Delta$. It also implies that the maximum degree $\Delta$ of $\G$ satisfies $\Delta=O(\log^2 n)$. 

In the case that $\cB(G,v,1)$ occurs, i.e. with probability $O(n_i^{-1.9})$ (see Lemma \ref{hyper}) 
$$|ex_{4,i}-ex_{4,i+1}|\leq 2e_i \leq 4n_i+ex_{4,i} \leq 5n_i.$$ 
Observe that if an action of Type 2,3b,4,5 or 34 takes place then $v$ lies on a subgraph with $a<12$ vertices and $a$ edges. Lemma \ref{hyper} states that this occurs with probability $o(n^{-0.9})$. For all the above hyperactions we have $|ex_{4,i+1}-ex_{4,i}|\leq 2$. This follows from the fact that performing a contraction can increase $ex_4$ by at most 2. This is because the initial vertices with degrees say $2,d_1,d_2$ contributed $\max\{ 0, (d_1-4)+(d_2-4) \}$ to $ex_{4,i}$ while the new contracted vertex has degree $d_1+d_2-2$ and contributes $\max\{ 0, d_1+d_2-2-4) \}$ to $ex_{4,i+1}$. Moreover observe that if a hyperaction of Type 5, 33 or 34 occurs then all the vertices involved have degree 3. If $\cB(\G,v,1)$ does not occur then a hyperaction of Type 5 will increase $ex_{4,i}$ by 1 (since there will be one new vertex of degree 5). Hyperactions of Type 33 or Type 34 do not change $ex_4$. Thus it remains to examine the effects of a hyperaction of Type 1 or of Type 3a. 

If  $ex_{4,i}=0$ then a hyperaction of Type 1 does not increase $ex_{4,i}$ while a hyperaction of Type 3 could increase it by 2. 

If $ex_4(\G)>0$ then the $i^{th}$ hyperaction starts with a max-edge removal. \\
{\bf Case 1:} If the smaller degree vertex involved is of degree larger than 3, then this results in a hyperaction of Type 1. This happens with probability $(1+o(1))(1-p_{3,i})$. Furthermore in this case $ex_{4,i}-2\leq ex_{4,i+1}\leq ex_{4,i}-1$.
 (The $(1+o(1))$ factor arises because of $O(1)$ degree changes during the hyperaction.)\\
{\bf Case 2:} If the smaller degree vertex $v$ involved is of degree 3 then a contraction is performed. And this occurs with probability $(1+o(1))p_{3,i}$. If the contraction results in a vertex of degree at least 3 then we have a hyperaction of Type 3a, and not of Type 1. Also this is the only way for a hyperaction of Type 3a to occur. Let the other two vertices involved in the contraction be $a,b$ and  have degrees $d_a,d_b$ respectively. Now $d_a=d_b=3$ with probability $(1+o(1))p_{3,i}^2$, resulting in a new vertex that has degree at most 4. In this case, $ex_{4,i+1}-ex_{4,i}= -1$ (the -1 here originates from the max-edge removal). Else if $d_a=3,d_b=4$ then we have $-1\leq ex_{4,i}-ex_{4,i}\leq -1+1=0$. And this occurs with probability $(1+o(1))2p_{3,i}p_{4,i}$. Else if $d_a=d_b=4$ then we have $-1\leq ex_{4,i+1}-ex_{4,i}\leq -1+2= 1$ (this occurs with probability $p_{3,i}p_{4,i}^2$. Otherwise a vertex of degree at least 5 is involved and given our upper bound on $ex_{4,i}$, this happens with probability $o(1)$. If the event $\cB(\G,v,1)$ does not occur then the new contracted vertex has degree   $d_a+d_b-2$. Hence $-1\leq ex_{4,i} - ex_{4,i} \leq -1+2=1.$ 

Thus in all cases $|ex_{4,i} - ex_{4,i}|\leq 2$ and if $ex_{4,i}>0$ then
\begin{align*}
\ex[ex_{4,i+1}-ex_{4,i}| \G_i] &\leq -(5n_i)\cdot o(n_i^{-1.9})+ 2n_i^{-0.95} - (1-p_{3,i}) -p_{3,i}^3 + p_{3,i}p_{4,i}^2+o(1)
\\&\leq - (1-p_{3,i}) -p_{3,i}^3 +p_{3,i}(1-p_{3,i})^2+o(1)
 \leq -\frac13
\end{align*} 
(The final expression in $p_{3,i}$ is maximized at $p_{3,i}=1/2$, for $p_{3,i}\in [0,1]$.)
\qed
\subsection{Proof of Lemma \ref{4inf}} 
We start by proving the following Lemma:
\begin{lem}\label{intervals}
Let $\G_h$ be such that $ex_{4,h}=0$. Then with probability $1-o(n_j^{-1.8})$ there exists $1\leq j\leq 0.25 \log^2n_h$ satisfying $ex_{4,h+j}=0$. Furthermore
$ex_{4,h+i}\leq \log^2n_{h+i}$ for $i\leq j$.  
\end{lem}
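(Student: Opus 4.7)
The plan is to read the lemma as a hitting-time estimate for the stochastic process $X_i := ex_{4,h+i}$, started at $X_0 = 0$. Set $k := \lfloor \tfrac14 \log^2 n_h\rfloor$. By Lemma \ref{drift}, whenever $X_i > 0$ and $X_i \leq \log^2 n_{h+i}$, the process has expected one-step drift at most $-1/3$, and its jumps are bounded by $2$ except on an event of probability $o(n_{h+i}^{-1.9})$. The statement we want says that $X$ returns to $0$ within $k$ steps (and stays below $\log^2 n_{h+i}$ until it does so) except with probability $o(n_h^{-1.8})$.

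First I would build the frame in which the drift estimate may be used throughout $[0,k]$. Each hyperaction destroys only $O(1)$ vertices, so $n_{h+i} \geq n_h/2$ for $i \leq k$ once $n_h$ is large. Let $E$ be the event that $|X_{i+1}-X_i| \leq 2$ for every $0 \leq i < k$; a union bound over the tail estimates of Lemma \ref{drift} gives
\[
\pr(E^c) \leq k \cdot o(n_h^{-1.9}) = o(n_h^{-1.8}).
\]
On $E$ the crude pathwise bound $X_i \leq 2i \leq \tfrac{1}{2}\log^2 n_h \leq \log^2 n_{h+i}$ holds for every $i \leq k$, so the hypothesis of Lemma \ref{drift} is valid at every step, and this simultaneously verifies the \emph{furthermore} clause of the statement.

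Next, let $T := \inf\{i \geq 1 : X_i = 0\}$ and let $\rho := \inf\{i : |X_{i+1}-X_i| > 2\}$. I would show $\pr(T > k,\ \rho > k) = o(n_h^{-1.8})$ by a supermartingale concentration argument. Define $Y_i := X_i + i/3$. By Lemma \ref{drift}, the stopped sequence $Y_{i \wedge T \wedge \rho}$ is a supermartingale with increments of absolute value at most $7/3$. On $\{T>k,\ \rho>k\}$ one has $Y_k \geq k/3$, while $Y_1 \leq 7/3$, so Azuma--Hoeffding applied to the stopped supermartingale gives
\[
\pr(\{T>k\}\cap\{\rho>k\}) \leq \exp\bigl(-\Omega(k)\bigr) = \exp\bigl(-\Omega(\log^2 n_h)\bigr),
\]
which is much smaller than $n_h^{-1.8}$. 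Combined with $\pr(\rho \leq k) \leq \pr(E^c) = o(n_h^{-1.8})$, this yields the required probability, with the witness $j := T$.

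The main obstacle is reconciling the unconditional drift estimate of Lemma \ref{drift} with the need for Azuma-type concentration, which requires genuinely bounded increments (whereas the rare bad hyperactions can in principle move $X$ by $\Theta(n)$). The remedy is precisely the stopping time $\rho$ above: stopping at the first large jump preserves the supermartingale property, supplies the pathwise $|Y_{i+1}-Y_i| \leq 7/3$ bound that Azuma demands, and costs only $o(n_h^{-1.8})$ in probability via the separate union bound on $E^c$.
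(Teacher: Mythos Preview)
Your approach matches the paper's: set aside a bad event of probability $o(n_h^{-1.8})$, observe that on the good event the crude bound $X_i\le 2i\le \log^2 n_{h+i}$ holds throughout the window (giving the ``furthermore'' clause and the hypothesis of Lemma~\ref{drift} at each step), and then apply Azuma--Hoeffding to the drifted process $X_i+i/3$. The paper phrases the concentration step as Hoeffding for a sum of bounded terms with mean $\le -1/3$; your supermartingale formulation with the stopping time $\rho$ is actually cleaner.

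There is one gap. You write ``each hyperaction destroys only $O(1)$ vertices, so $n_{h+i}\ge n_h/2$'', but this is not a deterministic fact: a bad hyperaction can in principle trigger a long cascade of contractions (for instance, repeatedly contracting a degree-$2$ vertex whose two degree-$3$ neighbours are adjacent produces another degree-$2$ vertex, and this can chain along a ladder-like substructure). Moreover your event $E$ does not rule this out, since such a cascade leaves $ex_4$ unchanged while destroying many vertices. The paper obtains the vertex-count control by invoking Lemma~\ref{hyper} rather than Lemma~\ref{drift}: with probability $1-o(n_h^{-1.8})$ every hyperaction in the window is one of the seven good types, each removing at most $8$ vertices, whence $n_{h+i}\ge n_h-2\log^2 n_h$. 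The fix is immediate --- enlarge your bad event (equivalently, your stopping time $\rho$) to include the occurrence of any non-good hyperaction, and cite Lemma~\ref{hyper} for the same $o(n_h^{-1.8})$ union bound.
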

\begin{proof}
If $ex_{4,h+1}=0$ then we are done. Otherwise Lemma \ref{drift} implies that 
$ex_{4,h+1}\in\{1, 2\}$ with probability $1-o(n_h^{-1.9})$.

Let $\cE_T$ be the event that for $j\leq 0.25\log^2 n_h$ {\sc reduce} performs only hyperactions of Type 1,2,3,4,5,33 or 34. Such a hyperaction reduces the vertex set by at most 8. Lemma \ref{hyper} implies that $\cE_T$ occurs with probability $1-o(n^{-1.8}_h)$.  Moreover if $\cE_T$ occurs for $j<0.25\log^2 n_h$ then $|n_{h+j}| \geq n_h-8\cdot 0.25 \log^2 n_h$. In addition from Lemma \ref{drift} we have that with probability $1-o(n^{-1.8})$, $|ex_{4,h+j}-ex_{4,h+j-1}|<2$ for $j<0.25 \log^2 n$ hence $ex_{4,h+j} \leq 2\cdot 0.25 \log^2 n_h \leq \log^2 n_{h+j}$. Finally conditioned on $\cE_T$ since $ex_{4,h+1}=1$ or 2, the probability that there is no $2\leq j\leq 0.25\log^2 n_h$ satisfying $ex_{4,h+j}=0$ is bounded by the probability that the sum of $0.25 \log^2 n-2$ independent random variables with magnitude at most 2 and expected value smaller than $-1/3$ (see Lemma \ref{drift}) is positive. From Hoeffding's inequality \cite{Hoeff} we have that the  later probability is bounded by $\exp\set{-\frac{2(\frac13\log^2n_h-3)^2}{\log^2 n_h}}=o(n^{-2}_h)$.
\end{proof}
Now let $\Gamma_0,\Gamma_{i_1},...,\Gamma_{i_\ell}$ be the subsequence of $\Gamma_{0},\Gamma_{1},..,\Gamma_{\tau}$ that includes all the graphs that have $ex_4=0$ and at least $\omega$ vertices. Then since $\G_i$ has minimum degree 3 and $e_i$ is decreasing with respect to $i$ using  Lemma \ref{intervals} we have that with probability 
$$1-\sum_{i:n_i\geq \omega}o( n_i^{-1.8} )\geq 1-\sum_{i:n_i\geq \omega} 2e_i^{-1.8}/3 \geq  1-\sum_{i=\omega}^{\infty} i^{-1.8}=1-o(\omega^{-0.8})$$
we have that for $j<\ell$  we have $n_{i_j} -n_{i_{j+1}} \leq 8\cdot 0.25\log ^2 n_{i_j}=2\log ^2 n_{i_j}$ and all the graphs $\Gamma_i$ preceding $\Gamma_{i_\ell}$ in $\Gamma_0,\Gamma_1,...,\Gamma_\tau$ satisfy $ex_{4,i} \leq \log^2 n_i$.  Suppose now that $n_{i_\ell}> 2\omega$. Then the above argument implies that w.h.p. there is $j>i_\ell$ such that $ex_{4,j}=0$ and $n_j\geq n_{i_\ell}-2\log^2n_{i_\ell}\geq \omega$ and this contradicts the definition of $i_\ell$. Thus, w.h.p., $\omega\leq n_{i_\ell}\leq 2\omega$ and hhis completes the proof of Lemma \ref{4inf}.
\qed
\section{Existence of a Perfect Matching}\label{Proofs Matchings}
We devote this section to the proof of  Lemma \ref{34match}.  As discussed in the previous section it is enough to prove that given a degree sequence $\bd=(d(1),...,d(n))$ consisting only of 3's and 4's, if we let $G$ be a random configuration (multi)-graph with degree sequence $\bd$, then w.h.p. $G$ has a (near)-perfect matching (i.e we can lift the condition that $G$ has no loops). We will first assume that $n$ is even and verify Tutte's condition. That is for every $W\subset V$ the number of odd components induced by $V \setminus W$, $q(V/W)$, is not larger that $|W|$. We split the verification of Tutte's condition into two lemmas. 
\begin{lem}\label{minimal}
Let $W\subset V$ be a set of minimum size that satisfies $q(V/W)>|W|$. Then with probability $1-O(n^{-3/4})$, $|W| > 10^{-5}n$.  
\end{lem}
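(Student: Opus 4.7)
The plan is to argue by contradiction: assume that a minimum bad $W$ has $w := |W| \le 10^{-5}n$, and show that this forces an event whose probability in the configuration model is $O(n^{-3/4})$. The first step is to extract structural consequences of minimality. A swap argument gives that every component of $V\setminus W$ is odd, and each $v\in W$ is adjacent to vertices in at least two distinct components of $V\setminus W$: otherwise moving $v$ out of $W$ would merge $v$ with its single adjacent odd component (turning it even) and produce a strictly smaller bad set $W\setminus\{v\}$. Consequently, the total number of edges between $W$ and $V\setminus W$ lies in $[2w, 4w]$, while $q := q(V\setminus W) \ge w+1$.

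Next I pigeonhole on the family $\{C_i\}_{i=1}^q$ of odd components. From $\sum_i |C_i| \le n-w$, most of the $C_i$ have size $O(n/w)$; from $\sum_i e_G(C_i, W) \le 4w$, most of the $C_i$ have $O(1)$ edges into $W$. Intersecting, at least a positive fraction of the $C_i$---that is, at least $\Omega(w)$ of them---are both ``small'' (size $O(n/w)$) and ``sparsely attached'' (at most $O(1)$ external edges). Each such component, together with its $W$-neighbourhood, is a subgraph on $O(n/w)$ vertices with only $O(1)$ boundary edges and with degree sum at least three times its vertex count.

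The main step is then a first-moment estimate in the configuration model. For each $w$, sum over choices of $W$ of size $w$ and over tuples of $\Omega(w)$ disjoint odd subsets $\{C_j\}$ in $V\setminus W$ of the above type. The probability that each $C_j$ really is a component of $V\setminus W$ (i.e.\ all external half-edges of $C_j$ pair into $W$) factors in the configuration model; each factor is bounded by roughly $(d_W/(2m))^{e_{\mathrm{ext}}(C_j)} = (O(w/n))^{e_{\mathrm{ext}}(C_j)}$, times the standard contribution from the internal pairings of $C_j$. Combined with the combinatorial counting factors $\binom{n}{w}$ and $\binom{n-w}{|C_j|}$, I would aim to show that summing over $1 \le w \le 10^{-5}n$ gives an $O(n^{-3/4})$ total; the threshold $10^{-5}$ is presumably tuned so that the configuration-model rate beats the entropy $\binom{n}{w}$ of choosing $W$.

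\textbf{Main obstacle.} The key technical difficulty is controlling the first moment uniformly across the range $1 \le w \le 10^{-5}n$. For very small $w$ (say $w = O(\log n)$) the ``small'' components can still be large, and the argument must be supplemented by directly counting small, nearly-isolated dense subgraphs, to which Lemma~\ref{dence} can be applied. For $w$ close to the threshold $10^{-5}n$, the entropy $\binom{n}{w}$ is exponential in $n$, and the sparse-attachment property of the nice components must be used very sharply to compensate. I expect the tightest regime producing the $n^{-3/4}$ rate to lie somewhere in the middle of this range, and the specific constant $10^{-5}$ to arise precisely as the largest threshold at which this balance still closes.
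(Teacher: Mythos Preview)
Your approach diverges from the paper's, and the obstacle you flag under ``Main obstacle'' is genuine and is not resolved by what you sketch.

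The paper does not pigeonhole on many small, sparsely-attached components. Instead it splits into two cases according to the size $r$ of the largest component $C_z$ of $V\setminus W$. In \emph{Case~1} ($r\le 0.997n$), the components of $V\setminus W$ can be grouped into two unions $V_1,V_2$ with $|V_2|\ge 10^{-3}n$ and with no $V_1$--$V_2$ edges; a single configuration-model estimate on this empty bipartition handles the case, and crucially the relevant set $V_2$ has size $\Theta(n)$ regardless of $w$. In \emph{Case~2} ($r\ge 0.997n$), minimality is used to show that every $v\in W$ is adjacent to at least \emph{three} distinct components of $V\setminus W$ (your swap argument, carried one step further, rules out two as well), hence $e(W:V_2)\ge 2w$ where $V_2=V\setminus(C_z\cup W)$ satisfies $w\le |V_2|\le 0.003n$; a second, sharper configuration-model count then closes this case. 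The point of the dichotomy is that it pins down the sizes of the sets entering the first-moment bound independently of the awkward parameter $w$, so no uniformity problem arises.

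In your scheme, by contrast, the $\Omega(w)$ ``nice'' components have size $O(n/w)$, and the first-moment sum must be controlled uniformly as $w$ ranges over several orders of magnitude. The small-$w$ end is where this breaks: the components can then have linear size, and Lemma~\ref{dence} is of no help there---it concerns subgraphs on at most a \emph{fixed} number $K=O(1)$ of vertices, not subgraphs of size $\Theta(n)$. What you would actually need in that regime is an edge-expansion input (a component with $O(1)$ boundary edges witnesses a small edge cut of $G$), but you do not invoke one, and proving such a statement with the required $O(n^{-3/4})$ failure probability is essentially the content of the paper's Case~1. Two minor points: your claim that every component of $V\setminus W$ is odd does not follow from minimality of $|W|$ and is not needed anyway; and, as noted, minimality gives ``at least three adjacent components'', not two.
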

\begin{lem}\label{maximal}
Let $W\subset V$ be a set of maximum size that satisfies $q(V/W)>|W|$. Then  with probability $1-O(n^{-3/4})$, $|W| < 10^{-5}n$.  
\end{lem}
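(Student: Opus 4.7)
My plan is a first-moment argument over the structural class of ``maximum-size Tutte violators'', based on the following observation: if $W\subset V$ is an inclusion-maximum set satisfying $q(V\setminus W)>|W|$, then every component of $V\setminus W$ has odd cardinality. Indeed, if some component $C$ were even, picking any $v\in C$ and setting $W'=W\cup\{v\}$, the components of $V\setminus W'$ are those of $V\setminus W$ with $C$ replaced by the components of $C\setminus\{v\}$; since $|C\setminus\{v\}|$ is odd, at least one of these is odd-sized, so $q(V\setminus W')\ge q(V\setminus W)+1\ge|W|+2=|W'|+1$, contradicting maximality. I therefore restrict attention to candidate $W$ with only odd components in $V\setminus W$.

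For each $w\in[10^{-5}n,(n-1)/2]$ (the upper bound follows from $\sum|C_i|=n-w\ge w+1$), I will bound the expected number of such sets $W$ with $|W|=w$. Since a graph on $N$ vertices with $e$ edges has at least $N-e$ components, any such $W$ forces $e(V\setminus W)\le(n-w)-(w+1)=n-2w-1$. A direct calculation in the configuration model gives $\e[e(V\setminus W)]=\Theta(n(1-w/n)^2)$; the key quantitative input will be that $\e[e(V\setminus W)]-(n-2w-1)=\Omega(n)$ uniformly over the range, with the minimum of this gap (as a function of $\alpha=w/n$) achieved in the interior and bounded below by an absolute constant, independently of the mix of degree-$3$ and degree-$4$ vertices.

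In the range $w\in[10^{-5}n,(1/2-\eta)n]$ for a small absolute $\eta>0$, concentration of $e(V\setminus W)$ around its mean in the configuration model (via the explicit large-deviation rate for linear statistics of a random perfect matching on $2m$ points) yields $\p{e(V\setminus W)\le n-2w-1}\le\exp(-cn)$ with rate $c$ strictly exceeding $\log 2-\eta'$, where $\binom{n}{w}\le\exp(n(\log 2-\eta'))$. The union bound over $W$ and over this range of $w$ then contributes at most $\exp(-\Omega(n))$ in total.

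The main obstacle is the complementary range $w\in((1/2-\eta)n,(n-1)/2]$, where $\binom{n}{w}$ is essentially $2^n$ and the crude edge-deviation bound does not suffice. To handle it I will exploit additional structure: since $|V\setminus W|\le(1/2+\eta)n$ yet $V\setminus W$ contains more than $w\approx n/2$ odd components, almost all components must be singletons, so $V\setminus W$ is within $O(\eta n)$ edges of being an independent set of size close to $n/2$. I will close this case by a first-moment bound on the number of vertex subsets $U$ with $|U|\approx n/2$ and $e(U)\le\eta n$, using that a random graph with degree sequence in $\{3,4\}$ has independence number at most $(1/2-\eta_0)n$ w.h.p.\ for some absolute $\eta_0>0$ (this follows from the analogous $0.46\,n$ bound for random cubic graphs by a standard configuration-model comparison, noting that degree-$4$ vertices only tighten the constraint). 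Taking $\eta<\eta_0$ makes this event have probability $\exp(-\Omega(n))$, so summing the contributions of both regimes yields probability $\exp(-\Omega(n))$, comfortably within the required $O(n^{-3/4})$.
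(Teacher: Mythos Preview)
Your opening structural observation is correct and is in the same spirit as what the paper exploits: maximality of $W$ forces every component of $V\setminus W$ to be odd. (The paper extracts a different and stronger consequence of maximality, namely that no non-singleton component of $V\setminus W$ is a tree; see below.)

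However, the core inequality on which your whole first-moment argument rests is false. You write: ``Since a graph on $N$ vertices with $e$ edges has at least $N-e$ components, any such $W$ forces $e(V\setminus W)\le(n-w)-(w+1)$.'' The implication goes the wrong way. The bound $k\ge N-e$ is a \emph{lower} bound on the number of components; knowing in addition that $k\ge w+1$ tells you nothing about an \emph{upper} bound for $e$. To get $e\le N-(w+1)$ from $k\ge w+1$ you would need $k\le N-e$, i.e.\ that every component is a tree, and your ``all components odd'' observation certainly does not give that. A concrete obstruction: take $w+2$ components, $w+1$ of them singletons and one of odd size $n-2w-1$; the large component can carry close to $\tfrac{3}{2}(n-2w-1)$ internal edges, far exceeding $n-2w-1$. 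So the event $\{e(V\setminus W)\le n-2w-1\}$ is \emph{not} implied by ``$W$ is a maximum-size Tutte violator'', and your first-range first-moment computation bounds the wrong event. This is a genuine gap, not a detail: for $w$ bounded away from $n/2$, nothing in your plan survives once this inequality is removed.

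For comparison, the paper's proof uses maximality to show that no component of $V\setminus W$ is a non-singleton tree. Combined with a short argument that there are at most $O(w/\log n)$ non-singleton components (few short cycles, and large components are few because they are large), this forces almost all of the $>w$ components to be isolated vertices. The first-moment calculation is then carried out directly on this structure --- essentially counting configurations in which a set $W_1$ of roughly $w$ isolated vertices has all its half-edges matched into $W$ --- rather than via an edge-count surrogate. Your near-$n/2$ regime idea (reduce to an independent set of size close to $n/2$ and invoke the $\alpha(G)\le(1/2-\eta_0)n$ bound for random graphs with degrees in $\{3,4\}$) is sound in isolation and is close in spirit to the paper's use of the many singletons, but it covers only a sliver of the range; the bulk of the work, where your argument currently fails, still has to be done by a calculation of the paper's type.
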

Lemmas \ref{minimal} and \ref{maximal} together imply Tutte's condition as there exists no set with size that is both strictly larger and strictly smaller than $10^{-5}n$. In the proof of these lemmas we use the following estimates.
\begin{lem}\label{estimates}
The number of distinct partitions of a set of size $2r$ into 2-element subsets, denoted by $\phi(2r)$, satisfies $\phi(2r)=\Theta\left(\bfrac{2r}{e}^r\right)$. Also for $\ell<r$ we have $\phi(2r)\leq 2r^{r-\ell} \bfrac{2\ell}{e}^\ell$.
\end{lem}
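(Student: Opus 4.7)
The plan is to use the standard closed form $\phi(2r)=(2r)!/(2^r r!)$, which one gets by listing the $2r$ elements in sequence ($(2r)!$ orderings), then quotienting by the $2^r$ internal orders within each pair and the $r!$ orderings of the pairs themselves. Equivalently, pairing off one specific element at a time gives the double-factorial recursion $\phi(2r)=(2r-1)\phi(2r-2)$, hence
\[
\phi(2r)=\prod_{j=1}^{r}(2j-1).
\]

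For the $\Theta$ estimate, I would just plug Stirling's formula $n!=(1+o(1))\sqrt{2\pi n}\,(n/e)^n$ into the closed form:
\[
\phi(2r)=\frac{(2r)!}{2^r r!}\sim \frac{\sqrt{4\pi r}\,(2r/e)^{2r}}{2^r\sqrt{2\pi r}\,(r/e)^{r}}
=\sqrt{2}\cdot\frac{(2r)^{2r}}{2^r\,r^r\,e^r}=\sqrt{2}\,\bfrac{2r}{e}^{r},
\]
giving both the upper and lower $\Theta$ bounds simultaneously. To pin down explicit constants valid for every $r\geq 1$ (so that a universal leading $2$ suffices in the finer bound), I would invoke two-sided Stirling inequalities of the form $\sqrt{2\pi}\,n^{n+1/2}e^{-n}\le n!\le e\,n^{n+1/2}e^{-n}$, which yield $\phi(2r)\le (e/\sqrt{\pi})(2r/e)^{r}$ for all $r\ge 1$, comfortably below the stated constant.

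For the second, finer inequality, I would use the product form to telescope:
\[
\phi(2r)=\phi(2\ell)\cdot\prod_{j=\ell+1}^{r}(2j-1).
\]
Each of the $r-\ell$ odd factors is at most $2r-1<2r$, so $\prod_{j=\ell+1}^{r}(2j-1)\leq (2r)^{r-\ell}$. Combining with the explicit upper bound $\phi(2\ell)\leq c\,(2\ell/e)^{\ell}$ from the first part (with $c\le e/\sqrt{\pi}<2$) gives an inequality of the stated shape, with the coefficient $2$ absorbing the Stirling constant. (If one prefers a slightly tighter comparison, one can write $2j-1=2j(1-1/(2j))$ and note $\prod_{j=\ell+1}^{r}(1-1/(2j))\le 1$, but the crude factor $(2r)^{r-\ell}$ is already enough.)

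There is no real obstacle here beyond bookkeeping; the content is Stirling plus the telescoping identity. The only place to be slightly careful is making the leading constants in the $\Theta$ statement explicit enough that the same constant survives when plugged into the product decomposition, which is why I would use a non-asymptotic Stirling bound rather than the $\sim$ relation. Once this is done, the second inequality follows in one line from the first.
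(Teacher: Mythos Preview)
Your proposal is correct and takes essentially the same approach as the paper: Stirling's formula for the $\Theta$ estimate, and for the second inequality a crude power bound on $\phi(2r)/\phi(2\ell)$ followed by the Stirling bound on $\phi(2\ell)$. The paper works directly with the quotient $(2r)!/(2^r r!)$ and invokes $(2r)!\le (2r)^{2(r-\ell)}(2\ell)!$, whereas you use the equivalent double-factorial telescoping $\phi(2r)=\phi(2\ell)\prod_{j=\ell+1}^{r}(2j-1)$; these are the same idea, and your care with the non-asymptotic Stirling constant is if anything more explicit than the paper's treatment.
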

\begin{proof}
To generate a matching we first choose a permutation of the $2r$ items and then we pair the $(2i-1)$th item with the $2i$th item. Therefore, using Stirling's approximation we have 
$$\phi(2r)=\frac{{(2r)}!}{2^r r!} =\frac{\Theta(\sqrt{2r} \bfrac{2r}{e}^{2r})}{\Theta(2^r \sqrt{r} \bfrac{r}{e}^r)}= \Theta\bigg( \bfrac{2r}{e}^r\bigg).$$
Also
$$\phi(2r)=\frac{{(2r)}!}{2^r r!} \leq \frac{(2r)^{r-\ell}{(2\ell)}!}{2^{r-\ell} 2^\ell \ell!}   \leq 2r^{r-\ell} \bfrac{2\ell}{e}^\ell,$$
where we have used $(2r)!\leq (2r)^{2(r-\ell)}(2\ell)!$.
\end{proof}
\subsection{Proof of Lemma \ref{minimal}:}
Let $W$ be a  set satisfying $q(V \setminus W)>|W|$ of minimum size and assume $2\leq w=|W|\leq 10^{-5}n$. We can rule out the case $w=1$ from the fact that with probability $1-O(1/n)$, $G$ will be 3-connected, see e.g. the proof of Theorem 10.8 in \cite{FK} . Let $C_z$ be a component spanned by $V\setminus W$ of maximum size and let $r=|C_z|$. 

\textbf{Case 1: $r=|C_z|\leq 0.997n$.}} In this case we can partition $V\setminus W$ into two parts $V_1,V_2$ such that (i) each $V_l,l=1,2$ is the union of components of $V\setminus W$, (ii) $|V_1|\geq |V_2|$, and (iii) $|V_2|\geq (n-(r+w))/2\geq 10^{-3}n$. 

Let $d_2=d(V_2)$ and $d_W=d(W)$. Out of the $d_W$ endpoints in $W$ (i.e. configuration points that correspond to vertices in $W$), $\ell\leq d_W$ are matched with endpoints in $V_2$ and the rest with endpoints in $V_1$. 

For fixed $i,w,d_W$ the probability that there are sets $V_1,V_2,W$ with $w=|W|, d(W)=d_W$ and $|V_2|=i$ satisfying $1\leq w\leq 10^{-5}n, 10^{-3}n\leq i\leq 0.5n$ and $d_W\leq 4w\leq 0.04i$, such that $V_1 \times V_2$ spans no edges is bounded by 
\begin{align*}
p_1 &\leq  \sum_{\ell=0}^{d_W}
\binom{n}{i} \binom{n-i}{w} \binom{d_W}{\ell} \frac{\phi(d_2+\ell)\cdot \phi(2m-d_2-\ell) }{\phi(2m)}\\ 
&\leo \sum_{\ell=0}^{d_W}
\bfrac{en}{i}^i \bfrac{en}{w}^w 2^{d_W}  \frac{\bfrac{d_2+\ell}{e}^{(d_2+\ell)/2} 
\bfrac{2m-d_2-\ell}{e}^{(2m-d_2-\ell)/2} }{\bfrac{2m}{e}^m}\\ 
&\leq\sum_{\ell=0}^{d_W}
 \bfrac{en}{i}^i \bfrac{100en}{i}^{i/100} 2^{i/25}  \bfrac{d_2+\ell}{2m}^{(d_2+\ell)/2} 
\bigg(1-\frac{d_2+\ell}{2m}\bigg)^{(2m-d_2-\ell)/2} \\  
& \leo \sum_{\ell=0}^{d_W}
  \bfrac{1600(en)^{101}}{i^{101}}^{i/100} \bfrac{d_2+\ell}{2m}^{(d_2+\ell)/2} \exp\set{- \frac{d_2+\ell}{2}\brac{1-\frac{d_2+\ell}{2m}}} 
\end{align*}
For the third line we used the fact that $w\leq i/100$ and $d_W\leq 4w \leq i/25$. 

Let $f(x)=x^xe^{-x(1-x)}$ and $L(x)=\log f(x)$. Then $L''(x)=x^{-1}+2$ and so $L$ and hence $f$ is convex for $x>0$. Now $d_2+\ell \in J= [3i,4.04i]$ and since $\bfrac{d_2+\ell}{2m}^{(d_2+\ell)/2}\exp\set{- \frac{d_2+\ell}{2}\brac{1-\frac{d_2+\ell}{2m}}}=f\bfrac{d_2+\ell}{2m}^m$ we see that its  maxima are at the endpoints of $J$. In general $3i\leq 3n/2 \leq m$. However when $d_2+\ell=4.04i$ we have that 
\beq{2m}{
2m\geq 4.04i+3(n-i-w)\geq 4.04i+ 3(n-1.01i)= 3n+1.01i.
}
{\bf Case 1a: $d_2+\ell=3i$.}\\
We have $\frac{d_2+\ell}{2m} \leq \frac{3i}{3n}\leq \frac{1}{2}$ and $(d_2+\ell)(1-\frac{d_2+\ell}{2m})\geq 3i/2$.  Therefore,
\begin{align*}
p_1&\leo w   \bfrac{1600(en)^{101}}{i^{101}}^{i/100}\bfrac{i}{n}^{3i/2} e^{-3i/4} \\
&= w  \bigg[\frac{1600e^{26}}{2^{49}}  \bfrac{2i}{n}^{49}\bigg]^{i/100} \\ 
&\leq w \bigg[ e^{-1/2}  \bfrac{2i}{n}^{49}\bigg]^{i/100}\\
&\leq w e^{-i/200}.
\end{align*}
{\bf Case 1b: $d_2+\ell=4.04i$.}\\
It follows from \eqref{2m} that $\frac{d_2+\ell}{2m} \leq \frac{4.04i}{3n+1.01i} \leq 0.577$ where the second inequality uses $i\leq n/2$. It follows from this that $(d_2+\ell)(1-\frac{d_2+\ell}{2m})/2\geq 0.85i$. Hence,
\begin{align*}
p_1&\leo w   \bfrac{1600(en)^{101}}{i^{101}}^{i/100} \bfrac{4.04i}{3n+1.01i}^{2.02i} e^{-0.85i}\\
&\leo w  \bigg[1600e^{16} \bigg(\frac{n}{i}\bigg)^{101} \bigg( \frac{4.04i}{3n}\bigg)^{101} \bigg(\frac{4.04i}{3n+1.01i}\bigg) ^{101}\bigg]^{i/100}\\ 
&\leo w  \bigg[1600e^{16} \bigg(\frac{4.04}{3} \cdot 0.577\bigg)^{101}\bigg]^{i/100}\\
& \leo w e^{-i/100}.
\end{align*}
Therefore the probability that Case 1 is satisfied is bounded by a constant times
\begin{align*}
& \sum_{w=1}^{10^{-5}n} \sum_{i=10^{-3}n}^{0.5n} w e^{-i/200}=O(n^{-3/4}).
\end{align*}
\vspace{3mm}
\\ \textbf{Case 2:} $r=|C_z|\geq 0.997n$. 
Let $V_1=V(C_z)$, 
$V_2=V\setminus (V_1\cup W)$. First note that $V_2$ spans at least $w$ components. Therefore $|V_2|\geq w$. To lower bound $e(V_2:W)$ we use the following Claim.
\vspace{3mm}
\\ \textbf{Claim 1} Every vertex in $W$ is adjacent to at least 3 distinct components in $V \setminus W$, and hence to at least 2 vertices in $V_2$.
\vspace{3mm}
\\ \textbf{Proof of Claim 1:} Let $v \in W$ be such that it is adjacent to $t\in \{0,1,2\}$ components in $V\setminus W$. Consider $W'=W\setminus\{v\}$. Thus $|W'|=|W|-1$ . If $t=0$ then $q(V\setminus W')=q(V\setminus W)+1$. If $t=1$ then $q(V\setminus W')\geq q(V\setminus W)-1$. If $t=2$ then if the both of the corresponding components have odd size then the new component will also have odd side, while if only one of them has odd size then the new one has even size. Finally if both have even size the new one has odd size. In all three cases the inequality  $q(V\setminus W')\geq q(V\setminus W)-1$ is satisfied. Therefore $q(V\setminus W') \geq q(V\setminus W) -1 >|W|-1=|W'|  $ contradicting the minimality of $W$.
\qed 
\vspace{3mm}
\\ From Claim 1 it follows that $W:V_2$ spans  at least $2w$ edges. 
We also have that $|V_2|\leq n-r-w \leq 0.003n$. For fixed  $2\leq w\leq 10^{-5}n$, $3w\leq d_W \leq 4w$ and $w\leq i$  the probability that there exist such sets $V_1,V_2,W$, $|V_2|=i$, $w=|W|$, $d(W)=d_W$ and $2w\leq  \ell= e(V_2:W)\leq 4w$ is bounded by  
\begin{align*}
&\sum_{\ell=2w}^{d_W} 
 \binom{n}{i} \binom{n-i}{w} \binom{d_W}{\ell} \frac{\phi(d_2+\ell)\cdot \phi(2m-d_2-\ell) }{\phi(2m)}\\
& \leo  \sum_{\ell=2w}^{d_W} \bfrac{en}{i}^i \bfrac{en}{w}^w 2^{4w} \frac{\bfrac{d_2+\ell}{2}^{(d_2+\ell-2w)/2}  \bfrac{2w}{e}^{w} \bfrac{2m-d_2-\ell}{e}^{(2m-d_2-\ell)/2} }{\bfrac{2m}{e}^m}\\ 
&= \sum_{\ell=2w}^{d_W} \bfrac{en}{i}^i \bfrac{en}{w}^w 2^{4w}   \bfrac{2w}{2m}^{w} \brac{\frac{d_2+\ell}{2m}\cdot \frac{e}2}^{(d_2+\ell-2w)/2} 
\bigg(1-\frac{d_2+\ell}{2m}\bigg)^{(2m-d_2-\ell)/2} \\  
& \leo w \bfrac{en}{i}^i \bfrac{16e}{3}^{w} \bigg( \frac{5i}{3n} \cdot \frac{e}{2} \bigg)^{3i/2}\\
&\leo w   \brac{e^2 \bfrac{16e}{3}^{2w/i} \frac{5^3i}{3^3n}}^{i/2}.
\end{align*} 
For the second line we used the second inequality of Lemma \ref{estimates}. For the fourth line we used that $2w\leq \ell $, $d_2+\ell \leq 4|V_2|+4w \leq 0.01204n$ and so $\brac{\frac{d_2+\ell}{2m}\cdot\frac{e}2}^{(d_2+\ell-2w)/2}$ is maximized when $d_2,\ell$ are as small as possible, that is $d_2=3i,\ell=2w$. Furthermore note that $d_2+\ell-2w\geq d_2\geq 3i$ and $i\geq q(V\setminus W)-1\geq w$. Therefore the probability that Case 2 is satisfied is bounded by a constant times
\begin{align*}
&\sum_{w=2}^{10^{-5}n} \sum_{i=w}^{0.003n}  \brac{e^2\bfrac{16e}{3}^{2w/i} \frac{5^3i}{3^3n}}^{i/2}\\
&\leo  \sum_{w=2}^{10^{-5}n} \sum_{i=w}^{2w}\bfrac{C_1i}{n}^{i/2}+\sum_{w=2}^{10^{-5}n} \sum_{i=2w}^{0.003n}\bfrac{C_2i}{n}^{i/2}\\
\noalign{where $C_1=16^25^3e^4/3^5,C_2=16\cdot 5^3 e^3/3^4$,}
&\leq \sum_{i=2}^{n^{1/4}}i\brac{\bfrac{C_1}{n^{3/4}}^{i/2}+\bfrac{C_2}{n^{3/4}}^{i/2}} +\sum_{i=n^{1/4}}^{2\cdot 10^{-5}n}i\bfrac{2C_1}{10^5}^{i/2}+\sum_{i=n^{1/4}}^{0.003n}i\bfrac{6C_2}{10^3}^{i/2}\\
&=O(n^{-3/4}).
\end{align*} 
Finally, since $G$ has an even number of vertices, for $W=\emptyset$ we have $|W|=q(V\setminus W)=0$.
\qed
\subsection{Proof of Lemma \ref{maximal}:}
Let $W$ be a set satisfying $q(V \setminus W)>|W|$ of maximum size and assume $w=|W|\geq 10^{-5}n$.
\vspace{3mm}
\\
{\textbf{Claim 2}} No component induced by $V\setminus W$ is a tree with more than one vertex.
\vspace{3mm}
\\ \textbf{Proof of Claim 2:} Indeed assume that $C_i$ is such a component. If $|C_i|$ is even then
let $v$ be a leaf of $C_i$ and define $W'=W \cup \{v\}$. Then $C_i \setminus \{v\}$ is an odd component in $V \setminus W'$ and $q(V \setminus W')=q(V \setminus W)+1>|W|+1=|W'|$ contradicting the maximality of $W$.

Thus assume that $|C_i|$ is odd. Let $L_1$ be the set of leaves of $C_i$ and $L_2$ be the neighbors of $L_1$. Set $W'=W \cup |L_1|$. Then $|L_1| \geq |L_2|$. Furthermore every vertex in $L_1$ is an odd component in $V \setminus W'$ and in the case $|L_1|=|L_2|$ then $C_i \setminus (L_1\cup L_2)$ is also an odd component in $V \setminus W'$. Therefore,
\begin{align*}
q(V/W') &=q(V/W) -1 +|L_1|+\mathbb{I}(|L_1|=|L_2|) 
\\&\geq q(V/W) +|L_2|  +|L_1|-|L_2| +\mathbb{I}(|L_1|=|L_2|)-1
\\&> |W|+|L_2|=|W'|,
\end{align*}  
contradicting the maximality of $W$. \qed
\vspace{3mm}
\\
We partition $V \setminus W$ into three sets, $W_1,W_2$ and $W_3$, as follows. With the underlying graph being the one spanned by $V \setminus W$, $W_1$ consists of the isolated vertices in $V \setminus W$, $W_2$ consists of the vertices spanned by components that contain a cycle and have  size $s\leq \frac{1}{10}\log n$ and $W_3$ consists of the vertices that are spanned by a component of size at least $\frac1{10}\log n$. Finally let $W_4=W_2 \cup W_3$. To lower bound $W_1$ we use the following  claim.

\textbf{{{Claim 3:}}} W.h.p.\@  $W_4$ spans at most $\frac{11w}{\log n}$ components in $V\setminus W$.

\textbf{Proof of Claim 3:} First observe that the number of components spanned by $W_2$ is smaller than the number of cycles of size at most $\frac1{10} \log n$ in $G$, which we denote by $r$. 
\begin{align*}
\pr(r\geq n^{0.3}) &\leq n^{-0.3} \sum_{q=1}^{0.1 \log n} \binom{n}{q} 4^q q! \frac{\phi(2q) \phi(2m-2q)}{\phi(2m)}\\
& \leo  n^{-0.3} \sum_{q=1}^{0.1 \log n} \bfrac{en}{q}^q 4^q \bfrac{2q}{e}^q \bfrac{e}{2m}^q
\\& \leo n^{-0.3} \sum_{q=1}^{0.1 \log n} \bfrac{8e}{3}^q 
\leo   n^{-0.3} (\log n) 8^{0.1 \log n}=o(1). 
\end{align*}
Hence w.h.p.\@ $W_2$ spans at most $n^{0.3}$ components. Moreover every component spanned by $W_3$ has size at least $\frac{1}{10}\log n$. Therefore $W_4$ spans  at most $n^{0.3}+\frac{10w}{\log n}= \frac{(1+o(1))10w}{\log n}$ components in $V\setminus W$.\qed

Since $W_4$ spans at most $u=\frac{11w}{\log n}$ components in $V\setminus W$ and no component is a tree it follows that the rest of the components consist of at least $q(V\setminus W) -u > w-u$  isolated vertices that lie in $W_1$.

For convenience, we move $|W_1|-(w-u)$ vertices from $W_1$ to $W_4$. Therefore  $|W_1|= w-u$.
Let $k_1$ be the number of vertices of degree 4 in $W_1$ and $d=d(W)-d(W_1)$.
Then $0\leq d\leq 4w-(3(w-u)+k_1)=w+3u-k_1$. For fixed $10^{-5}n\leq w\leq 0.5n$ the probability that there exist such sets $W,W_1,W_4$ is bounded by 

\begin{align}
p_2&\leq \sum_{k_1=0}^{w-u} \sum_{d=0}^{w+3u-k_1}
\binom{n}{2w} \binom{2w}{w}\binom{w}{u} \binom{4w}{d} \pr(d(W)-d(W_1)=d) \label{2}\\ 
&\times  (3(w-u)+k_1)! \times \frac{ [2m- [6(w-u)+2k_1]]!}{2^{m-[3(w-u)+k_1}[m-[3(w-u)+k_1]]!}  \times \frac{2^m m!}{(2m)!}.\label{3}
\end{align}
\textbf{Explanation:} We first choose the sets $W,W_1$  and $W_4$ of size $w,w-u$ and $n-2w+u$ respectively. This can be done in $\binom{n}{2w} \binom{2w}{w}\binom{w}{u} \binom{n-2w+u}{u}^{-1} $ ways, but we ignore the final factor. 

From the at most $4w$ copies of vertices in $W$ we choose a set $W''\subset W$ be of size $d$. 
We let $W'=W/W''$. These are the copies of vertices that will be matched with those in $W_1$.  

In the calculations that follow we let $a=w/n\geq 10^{-5}$. We also let $k_4$  be the number of vertices of degree 4 that lie in $W_4$. We first bound the binomial coefficients, found in the first line.
\begin{align}
  &\binom{n}{2w} \binom{2w}{w}\binom{w}{u} \binom{4w}{d} 
= \binom{n}{2an} \binom{2an}{an}\binom{an}{u} \binom{4an}{d} \nonumber\\
&\leq 2^{o(n)} \bfrac{1}{2a}^{2an} \bfrac{1}{1-2a}^{(1-2a)n} 2^{2an} \bfrac{4ean}{d}^{d} \nonumber\\
& =2^{o(n)}\bfrac{1}{a}^{2an} \bfrac{1}{1-2a}^{(1-2a)n} \bfrac{4ean}{d}^{d}.\label{f1}
\end{align}
For the second line we used that $u\leq u_0$ which implies that $\binom{an}{u}=2^{o(n)}$. Observe that 
\begin{align}\label{2m2}
2m=6(w-u)+2k_1+d+3(n-2w+u)+k_4=3n+d+2k_1+k_4-3u.
\end{align}
Let $m_0=d+2k_1+k_4-3u$. For the terms in line \eqref{3} we have
\begin{align*}
\frac{(2m)!}{2^m m!}&=\frac{(3n)!}{2^{1.5n}(1.5n)!}   
\frac{\prod_{i=1}^{m_0}(3n+i)}{2^{m_0/2} \prod_{i=1}^{m_0/2}(1.5n+i)}
\geo\bfrac{3n}{e}^{1.5n} \prod_{i=1}^{m_0/2}[3n+(2i-1)].\\
& \geq \bfrac{3n}{e}^{1.5n} e^{-o(n)}(3n)^{-3u/2} \prod_{i=1}^{d/2+k_1+k_4/2}[3n+(2i-1)]
\end{align*}
Equation \eqref{2m2} implies that 
$$2m-[6(w-u)+2k_1]=3(1-2a)n+3u+k_4+d.$$ 
Thus,  
\begin{align*}
& \frac{ [2m- [6(w-u)+2k_1]]!}{2^{m-[3(w-u)+2k_1]}[m-[3(w-u)+k_1]]!}=\frac{ [3(1-2a)n]!} {2^{1.5(1-2a)n}[1.5(1-2a)n]!} \cdot \frac{\prod_{i=1}^{d}3(1-2a)n+i}{2^{d/2}\prod_{j=1}^{\frac{d}{2}} 1.5(1-2a)n+j} 
\\&\hspace{5mm} \times \frac{\prod_{i=1}^{k_4}3(1-2a)n+d+i}{2^{k_4/2}\prod_{j=1}^{k_4/2} 1.5(1-2a)n+d/2   +j } \cdot \frac{\prod_{i=1}^{3u}3(1-2a)n+d+k_4+i}{2^{3u/2}\prod_{j=1}^{\frac{3u}{2}} 1.5(1-2a)n+d/2+k_4/2 +j}
\\ &\hspace{5mm}\leo  \bfrac{3(1-2a)n}{e}^{1.5(1-2a)n} \prod_{i=1}^{d/2} [3(1-2a)n+(2i-1)]
 \prod_{j=1}^{k_4/2} [3(1-2a)n+d+(2j-1) ] \cdot (2m)^{3u/2}
\\ &\hspace{5mm} \leo  \bfrac{3(1-2a)n}{e}^{1.5(1-2a)n} [3(1-2a)n+an/2]^{d/2} (2m)^{3u/2} \prod_{j=1}^{k_4/2} [3(1-2a)n+d+(2j-1) ]
\end{align*}
For the last inequality we used the Arithmetic Mean-Geometric Mean inequality and the fact that $d/2\leq an/2+ o(n)$, which follows from $d\leq w+3u-k_1$. 

For the first term of \eqref{3} we have
\begin{align*}
[3(w-u)+k_1]! \leq \frac{3w!}{ (3(w-u))^{3u}} \prod_{i=1}^{k_1}(3(w-u)+i)\leq \bfrac{3an}{e}^{3an} \frac{2^{o(n)}}{n^{3u}} \prod_{i=1}^{k_1}(3(w-u)+i).
\end{align*}
Thus the expression in \eqref{3} is bounded by
\begin{align}
&2^{o(n)} \bfrac{3an}{e}^{3an} \frac{1}{n^{3u}} \prod_{i=1}^{k_1}(3(w-u)+i) \nonumber\\ 
&\times \bfrac{3(1-2a)n}{e}^{1.5(1-2a)n} [3(1-2a)n+an/2]^{d/2} (2m)^{3u/2} \prod_{j=1}^{k_4/2} [3(1-2a)n+d+2j-1 ]\nonumber\\ 
&\times \bigg[ \bfrac{3n}{e}^{1.5n} (3n)^{-3u/2} \prod_{i=1}^{d/2+k_1+k_4/2}[3n+(2i-1)] \bigg]^{-1}\nonumber\\
&=2^{o(n)}  a^{3an}  [(1-2a)n]^{1.5(1-2a)n} \bfrac{6m}{n}^{3u/2}    \prod_{i=1}^{d/2}\frac{ 3(1-2a)n+an/2}{3n+(2i-1)}        \nonumber\\
&\times 
\prod_{i=1}^{k_1}\frac{ 3(w-u)+i}{3n+d+(2i-1)} 
\prod_{i=1}^{k_4/2} \frac{ 3(1-2a)n+d+2i-1}{3n+d+2k_1+2i-1}\nonumber\\
&\leo 2^{o(n)} a^{3an}  [(1-2a)n]^{1.5(1-2a)n}
\prod_{i=1}^{d/2}\frac{ 3(1-2a)n+an/2}{3n} 
\prod_{i=1}^{k_1}\frac{ 3(w-u)+i}{3n+d+(2i-1)} \prod_{i=1}^{k_4/2} 1 \nonumber\\ 
&\leo 2^{o(n)} a^{3an}  [(1-2a)n]^{1.5(1-2a)n}[(1-2a)+a/6]^{d/2} 2^{-k_1} \label{f2}
\end{align}
Finally we consider the term $\pr(d(W)-d(W_1)=d) $ and assume that $h$ vertices of degree 4 were chosen to be included in $W\cup W_1$, so that $d=h+3u-2k_1$. Then, because there are $\binom{h}{k_1}\binom{2w-u-h}{(w-u)-k_1}$ out of $\binom{2w}{w-u}$ ways to  distribute the $k_1$ vertices of degree 4,
\begin{align*}
p_3&=\pr(d(W)-d(W_1)=d) =\binom{h}{k_1}\binom{2w-u-h}{(w-u)-k_1} \bigg/ \binom{2w-u}{w-u}\\
&\leq \binom{h}{k_1}\binom{2w-u-h}{w-u}\bigg/\binom{2w-u}{w-u} \leq \binom{h}{k_1} \prod_{i=0}^{h-1} \frac{w-i}{2w-i}\\
&\leq  2^{hH(k_1/h) -h}=2^{k_1}2^{-k_1+h\cdot H(k_1/h)-h}.
\end{align*}
Here $H(x)=-x\log_2( x) -(1-x) \log_2(1-x)$ is the entropy function. For fixed $d$ we have $h=d+2k_1+o(n)$. Thus 
$$p_3 \leq 2^{o(n)+k_1+df(k_1/d)},\text{ where }f(x)= -x + (1+2x) H\brac{\frac{x}{1+2x}}-(1+2x).$$ 
$f(x)$ has a unique maximum at $x^*$, the solution to $8x(1+x)=(1+2x)^2$ and $f(x^*) \leq -0.771$. Hence 
\beq{f3}{
p_3\leq 2^{-0.771d+k_1+o(n)}.
} 
Multiplying the bounds in \eqref{f1}, \eqref{f2}, \eqref{f3} together we have a bound
\begin{align*}
p_2 & \leq 2^{o(n)-0.771d+k_1} \bfrac{1}{a}^{2an} \bfrac{1}{1-2a}^{(1-2a)n}  \bfrac{4ean}{d}^{d} 
\\ &\times a^{3an} (1-2a)^{1.5(1-2a)n} \bigg(1-2a+ \frac{a}{6}\bigg)^{d/2} 2^{-k_1}
\\ & = 2^{o(n)}  \bfrac{2^{1.229}ean}{d}^{d}  a^{an} (1-2a)^{0.5(1-2a)n} \bigg(1-\frac{11a}{6}\bigg)^{d/2}
\end{align*}
Thus $p_2=o(1)$ when $d=o(n)$. Let $d=ban$ for some $0<b\leq 1$. Then,
\begin{align*}
p_2 & \leq  \bigg\{ 2^{o(1)} \bigg[ \frac{2^{1.229}e}{b}\bigg(1-\frac{11a}{6} \bigg)^{0.5} \bigg]^b  a (1-2a)^{0.5(1-2a)/a}  \bigg\}^{an}
\end{align*}
Let $g(a)= {2^{1.229}e}\bigg(1-\frac{11a}{6} \bigg)^{0.5}$. When $g(a)<e$ then $\bfrac{g(a)}{b}^b$ is maximized when $b=\frac{g(a)}{e}$ which yields
$$ p_2 \leq  \bigg\{ 2^{o(1)} \ e^{ 2^{1.229} (1-\frac{11a}{6})^{0.5}}  a (1-2a)^{0.5(1-2a)/a}  \bigg\}^{an} 
\leq\bfrac{99}{100}^{an}.$$
The last inequality is most easily verified numerically.

When $g(a)>e$ then $\bfrac{g(a)}{b}^b$ is maximized at $b=1$. Hence
$$ p_2 \leq  \bigg\{ 2^{o(1)} \  2^{1.229} e \brac{1-\frac{11a}{6} }^{0.5}   a (1-2a)^{0.5(1-2a)/a}  \bigg\}^{an} \leq \bfrac{19}{20}^{an}.$$
The last inequality is most easily verified numerically. Thus the probability that there exists a set $W$ satisfying $q(V \setminus W)>|W|$ of size $w=|W|\leq 10^{-5}n$
is bounded by 
\begin{align*}
\sum_{w=10^{-5}n}^{0.5n}  \bfrac{99}{100}^w =o(1). 
\end{align*}
This only leaves the case of $n$ odd. The reader will notice that in none of the calculations above, did we use the fact that $n$ was even. The Tutte-Berge formula for the maximum size of a matching $\nu(G)$ is
$$\nu(G)=\min_{W\subseteq V}\frac12(|V|+|W|-q(V\setminus W)).$$
We have shown that the above expression is at least $|V|/2$ for $W\neq\emptyset$ and so the case of $n$ odd is handled by putting $W=\emptyset$ and $q(W)=1$.
\section{Conclusions and open questions}
The paper of Karp and Sipser \cite{KS} has been a springboard for research on matching algorithms in random graphs. Algorithm 1 of that paper has not been the subject of a great deal of analysis, mainly because of the way it disturbs the degree sequences of the graphs that it produces along the way. In this paper we have shown that if the original graph has small maximum degree then the maximum degree is controllable and the great efficiency of the algorithm can be verified.

It is natural to try to extend the analysis to random regular graphs with degree more than four and we are in the process of trying to overcome some technical problems. It would also be of interest to analyse the algorithm on $G_{n,p}$, as originally intended.

\section{Diagrams of Hyperactions of interest}
{\bf Type 2.}

\begin{center}
\pic{
\node at (0,0.2) {$w$};
\node at (1,0.2) {$u$};
\node at (2,0.2) {$v$};
\node at (3,1.2) {$x$};
\node at (3,0.2) {$y$};
\node at (3,-.8) {$z$};
\node at (-1,1.2) {$a$};
\node at (-1,-0.8) {$b$};
\draw (1,0) -- (2,0);
\draw (2,0) -- (3,1);
\draw (2,0) -- (3,-1);
\draw (2,0) -- (3,0);
\draw (0,0) -- (-1,1);
\draw (0,0) -- (-1,-1);
\draw (0,0) to [out=45,in=135] (1,0);
\draw (0,0) to [out=-45,in=-135] (1,0);
\draw [fill=black] (0,0) circle [radius=.05];
\draw [fill=black] (1,0) circle [radius=.05];
\draw [fill=black] (2,0) circle [radius=.05];
\draw [fill=black] (3,0) circle [radius=.05];
\draw [fill=black] (3,1) circle [radius=.05];
\draw [fill=black] (3,-1) circle [radius=.05];
\draw [fill=black] (-1,1) circle [radius=.05];
\draw [fill=black] (-1,-1) circle [radius=.05];
\draw [->] [ultra thick] (4,0) -- (5,0);
\draw (7,0) ellipse (.6 and .3);
\node at (7.05,0) {$wuv$};
\draw (7.6,0) -- (8.6,1);
\draw (7.6,0) -- (8.6,-1);
\draw (7.6,0) -- (8.6,0);
\draw (5.6,1) -- (6.5,0);
\draw (5.6,-1) -- (6.5,0);
\draw [fill=black] (8.6,1) circle [radius=.05];
\draw [fill=black] (8.6,-1) circle [radius=.05];
\draw [fill=black] (8.6,0) circle [radius=.05];
\draw [fill=black] (5.6,-1) circle [radius=.05];
\draw [fill=black] (5.6,1) circle [radius=.05];
\node at (5.6,1.2) {$a$};
\node at (5.6,-0.8) {$b$};
\node at (8.6,1.2) {$x$};
\node at (8.6,0.2) {$y$};
\node at (8.6,-.8) {$z$};
\node at (8.6,0.2) {$y$};
\node at (8.6,-.8) {$z$};
}
\end{center}

{\bf Type 3.}

\begin{center}
\pic{
\node at (1,0.2) {$u$};
\node at (2,0.2) {$v$};
\node at (3,1.2) {$x$};
\node at (3,0.2) {$y$};
\node at (3,-0.8) {$z$};
\node at (0,1.2) {$a$};
\node at (0,-.8) {$b$};
\node at (-1,2.2) {$c$};
\node at (-1,1.2) {$d$};
\node at (-1,-.8) {$e$};
\node at (-1,-1.8) {$f$};
\draw (1,0) -- (2,0);
\draw (2,0) -- (3,1);
\draw (2,0) -- (3,-1);
\draw (2,0) -- (3,0);
\draw (0,1) -- (1,0);
\draw (0,1) -- (-1,2);
\draw (0,1) -- (-1,1);
\draw (0,-1) -- (-1,-1);
\draw (0,-1) -- (-1,-2);
\draw (0,-1) -- (1,0);
\draw [fill=black] (-1,2) circle [radius=.05];
\draw [fill=black] (-1,1) circle [radius=.05];
\draw [fill=black] (-1,-2) circle [radius=.05];
\draw [fill=black] (-1,-1) circle [radius=.05];
\draw [fill=black] (0,1) circle [radius=.05];
\draw [fill=black] (0,-1) circle [radius=.05];
\draw [fill=black] (1,0) circle [radius=.05];
\draw [fill=black] (2,0) circle [radius=.05];
\draw [fill=black] (3,0) circle [radius=.05];
\draw [fill=black] (3,1) circle [radius=.05];
\draw [fill=black] (3,-1) circle [radius=.05];
\draw [->] [ultra thick] (4,0) -- (5,0);
\draw (7.5,0) ellipse (.6 and .3);
\node at (7.55,0) {$avb$};
\draw (9.1,0) -- (10.1,1);
\draw (9.1,0) -- (10.1,-1);
\draw (9.1,0) -- (10.1,0);
\draw [fill=black] (9.1,0) circle [radius=.05];
\draw [fill=black] (10.1,1) circle [radius=.05];
\draw [fill=black] (10.1,-1) circle [radius=.05];
\draw [fill=black] (10.1,0) circle [radius=.05];
\node at (9.1,0.2) {$u$};
\node at (10.1,1.2) {$x$};
\node at (10.1,0.2) {$y$};
\node at (10.1,-.8) {$z$};
\node at (6,2.2) {$c$};
\node at (6,1.2) {$d$};
\node at (6,-.8) {$e$};
\node at (6,-1.8) {$f$};
\draw [fill=black] (6,2) circle [radius=.05];
\draw [fill=black] (6,1) circle [radius=.05];
\draw [fill=black] (6,-1) circle [radius=.05];
\draw [fill=black] (6,-2) circle [radius=.05];
\draw (6,2) -- (7,0);
\draw (6,1) -- (7,0);
\draw (6,-2) -- (7,0);
\draw (6,-1) -- (7,0);
}
\end{center}
We allow the edge $\set{a,b}$ to be a single edge in this construction. This gives us a Type 3b hyperaction.

{\bf Type 4.}

\begin{center}
\pic{
\node at (0,0.2) {$v$};
\draw [fill=black] (0,0) circle [radius=.05];
\node at (-1,1.2) {$a$};
\node at (-1,0.2) {$b$};
\node at (-1,-0.8) {$c$};
\draw [fill=black] (-1,1) circle [radius=.05];
\draw [fill=black] (-1,0) circle [radius=.05];
\draw [fill=black] (-1,-1) circle [radius=.05];
\draw (-1,1) -- (0,0);
\draw (-1,0) -- (0,0);
\draw (-1,-1) -- (0,0);
\draw [fill=black] (1,0) circle [radius=.05];
\node at (1,0.2) {$u$};
\draw (0,0) -- (1,0);
\node at (2,1.2) {$x_1$};
\node at (2,-1.2) {$x_2$};
\draw [fill=black] (2,1) circle [radius=.05];
\draw [fill=black] (2,-1) circle [radius=.05];
\draw (1,0) -- (2,1);
\draw (1,0) -- (2,-1);
\draw (2,-1) -- (2,1);
\node at (3,1.2) {$w_1$};
\node at (3,-1.2) {$w_2$};
\draw [fill=black] (3,1) circle [radius=.05];
\draw [fill=black] (3,-1) circle [radius=.05];
\draw (2,1) -- (3,1);
\draw (2,-1) -- (3,-1);
\node at (4,2.2) {$p$};
\node at (4,1.2) {$q$};
\node at (4,-0.8) {$r$};
\node at (4,-1.8) {$s$};
\draw [fill=black] (4,2) circle [radius=.05];
\draw [fill=black] (4,1) circle [radius=.05];
\draw [fill=black] (4,-1) circle [radius=.05];
\draw [fill=black] (4,-2) circle [radius=.05];
\draw (3,1) -- (4,2);
\draw (3,1) -- (4,1);
\draw (3,-1) -- (4,-2);
\draw (3,-1) -- (4,-1);
\draw [->] [ultra thick] (5,0) -- (6,0);
\node at (8,0.2) {$v$};
\draw [fill=black] (0,0) circle [radius=.05];
\node at (7,1.2) {$a$};
\node at (7,0.2) {$b$};
\node at (7,-0.8) {$c$};
\draw [fill=black] (7,1) circle [radius=.05];
\draw [fill=black] (7,0) circle [radius=.05];
\draw [fill=black] (7,-1) circle [radius=.05];
\draw [fill=black] (8,0) circle [radius=.05];
\draw (8,0) -- (7,1);
\draw (8,0) -- (7,0);
\draw (8,0) -- (7,-1);
\draw (10,0) ellipse (1.4 and .3);
\node at (10,0) {$u,x_1,x_2,w_1,w_2$};
\node at (13,2.2) {$p$};
\node at (13,1.2) {$q$};
\node at (13,-0.8) {$r$};
\node at (13,-1.8) {$s$};
\draw [fill=black] (13,2) circle [radius=.05];
\draw [fill=black] (13,1) circle [radius=.05];
\draw [fill=black] (13,-1) circle [radius=.05];
\draw [fill=black] (13,-2) circle [radius=.05];
\draw (11.3,0) -- (13,2);
\draw (11.3,0) -- (13,1);
\draw (11.3,0) -- (13,-1);
\draw (11.3,0) -- (13,-2);
}
\end{center}

{\bf Type 5}.

\begin{center}
\pic{
\draw [fill=black] (0,1) circle [radius=.05];
\draw [fill=black] (0,-1) circle [radius=.05];
\node at (0,1.2) {$a$};
\node at (0,-.8) {$b$};
\draw [fill=black] (1,0) circle [radius=.05];
\node at (1,.2) {$z$};
\draw (0,1) -- (1,0);
\draw (0,-1) -- (1,0);
\draw [fill=black] (2,0) circle [radius=.05];
\draw (1,0) -- (2,0);
\node at (2,.2) {$u$};
\draw [fill=black] (3,0) circle [radius=.05];
\draw (3,0) -- (2,0);
\node at (3,-.2) {$v$};
\draw [fill=black] (3,1) circle [radius=.05];
\node at (3,1.2) {$x_1$};
\draw (3,1) -- (2,0);
\draw (3,1) -- (3,0);
\draw [fill=black] (4,0) circle [radius=.05];
\node at (4,.2) {$x_2$};
\draw (3,0) -- (4,0);
\draw [fill=black] (5,0) circle [radius=.05];
\draw [fill=black] (5,1) circle [radius=.05];
\draw [fill=black] (5,-1) circle [radius=.05];
\draw (4,0) -- (5,0);
\draw (3,1) -- (5,1);
\draw (4,0) -- (5,-1);
\node at (5,1.2) {$p$};
\node at (5,0.2) {$q$};
\node at (5,-.8) {$r$};
\draw [->] [ultra thick] (6,0) -- (7,0);
\draw (10,0) ellipse (1.4 and .3);
\node at (10,0) {$z,u,v,x_1,x_2$};
\draw [fill=black] (8,1) circle [radius=.05];
\draw [fill=black] (8,-1) circle [radius=.05];
\node at (8,-0.8) {$b$};
\node at (8,1.2) {$a$};
\draw [fill=black] (12.5,0) circle [radius=.05];
\draw [fill=black] (12.5,1) circle [radius=.05];
\draw [fill=black] (12.5,-1) circle [radius=.05];
\draw (11.4,0) -- (12.5,1);
\draw (11.4,0) -- (12.5,-1);
\draw (11.4,0) -- (12.5,0);
\node at (12.5,1.2) {$p$};
\node at (12.5,-0.8) {$r$};
\node at (12.5,0.2) {$q$};
\draw (8,1) -- (8.6,0);
\draw (8,-1) -- (8.6,0);
}
\end{center}

{\bf Type 33.}
\begin{center}
\pic{
\draw [fill=black] (0,1) circle [radius=.05];
\draw [fill=black] (0,-1) circle [radius=.05];
\node at (0,1.2) {$u_1$};
\node at (0,-.8) {$u_2$};
\draw [fill=black] (1,0) circle [radius=.05];
\node at (1,.2) {$u$};
\node at (2,.2) {$v$};
\draw (0,1) -- (1,0);
\draw (0,-1) -- (1,0);
\node at (-1,2.2) {$a$};
\node at (-1,1.2) {$b$};
\node at (-1,-.8) {$c$};
\node at (-1,-1.8) {$d$};
\draw (1,0) -- (2,0);
\draw (2,0) -- (3,1);
\draw (2,0) -- (3,-1);
\draw (0,1) -- (1,0);
\draw (0,1) -- (-1,2);
\draw (0,1) -- (-1,1);
\draw (0,-1) -- (-1,-1);
\draw (0,-1) -- (-1,-2);
\draw (0,-1) -- (1,0);
\draw [fill=black] (-1,2) circle [radius=.05];
\draw [fill=black] (-1,1) circle [radius=.05];
\draw [fill=black] (-1,-2) circle [radius=.05];
\draw [fill=black] (-1,-1) circle [radius=.05];
\draw [fill=black] (0,1) circle [radius=.05];
\draw [fill=black] (0,-1) circle [radius=.05];
\draw [fill=black] (3,1) circle [radius=.05];
\draw [fill=black] (3,-1) circle [radius=.05];
\node at (3,-.8) {$v_2$};
\node at (3,1.2) {$v_2$};
\draw [fill=black] (4,1) circle [radius=.05];
\draw [fill=black] (4,-1) circle [radius=.05];
\node at (4,2.2) {$p$};
\node at (4,1.2) {$q$};
\node at (4,-.8) {$r$};
\node at (4,-1.8) {$s$};
\draw [fill=black] (4,2) circle [radius=.05];
\draw [fill=black] (4,-2) circle [radius=.05];
\draw (3,1) -- (4,1);
\draw (3,1) -- (4,2);
\draw (3,-1) -- (4,-1);
\draw (3,-1) -- (4,-2);
\draw [->] [ultra thick] (5,0) -- (6,0);
\draw (9,0) ellipse (1 and .2);
\draw (12,0) ellipse (1 and .2);
\node at (14,2.2) {$p$};
\node at (14,1.2) {$q$};
\node at (14,-.8) {$r$};
\node at (14,-1.8) {$s$};
\node at (7,2.2) {$a$};
\node at (7,1.2) {$b$};
\node at (7,-.8) {$c$};
\node at (7,-1.8) {$d$};
\draw [fill=black] (14,2) circle [radius=.05];
\draw [fill=black] (14,1) circle [radius=.05];
\draw [fill=black] (14,-2) circle [radius=.05];
\draw [fill=black] (14,-1) circle [radius=.05];
\draw [fill=black] (7,1) circle [radius=.05];
\draw [fill=black] (7,-1) circle [radius=.05];
\draw [fill=black] (7,2) circle [radius=.05];
\draw [fill=black] (7,-2) circle [radius=.05];
\draw (14,2) -- (13,0);
\draw (14,1) -- (13,0);
\draw (14,-1) -- (13,0);
\draw (14,-2) -- (13,0);
\draw (7,2) -- (8,0);
\draw (7,1) -- (8,0);
\draw (7,-2) -- (8,0);
\draw (7,-1) -- (8,0);
\node at (9,0) {$u,u_1,u_2$};
\node at (12,0) {$v,v_1,v_2$};
}
\end{center}

{\bf Type 34.}

\hspace{1in}
\pic{
\draw [fill=black] (0,1) circle [radius=.05];
\draw [fill=black] (0,-1) circle [radius=.05];
\node at (0,1.2) {$v_1$};
\node at (0,-.8) {$v_2$};
\draw [fill=black] (1,0) circle [radius=.05];
\node at (1,.2) {$v$};
\node at (2,.2) {$u$};
\draw (0,1) -- (1,0);
\draw (0,-1) -- (1,0);
\node at (-1,2.2) {$a$};
\node at (-1,1.2) {$b$};
\node at (-1,-.8) {$c$};
\node at (-1,-1.8) {$d$};
\draw (1,0) -- (2,0);
\draw (2,0) -- (3,1);
\draw (2,0) -- (3,-1);
\draw (0,1) -- (1,0);
\draw (0,1) -- (-1,2);
\draw (0,1) -- (-1,1);
\draw (0,-1) -- (-1,-1);
\draw (0,-1) -- (-1,-2);
\draw (0,-1) -- (1,0);
\draw [fill=black] (-1,2) circle [radius=.05];
\draw [fill=black] (-1,1) circle [radius=.05];
\draw [fill=black] (-1,-2) circle [radius=.05];
\draw [fill=black] (-1,-1) circle [radius=.05];
\draw [fill=black] (0,1) circle [radius=.05];
\draw [fill=black] (0,-1) circle [radius=.05];
\draw [fill=black] (3,1) circle [radius=.05];
\draw [fill=black] (3,-1) circle [radius=.05];
\node at (3,-1.2) {$u_2$};
\node at (3,1.2) {$u_1$};
\draw (3,1) -- (3,-1);
\draw [fill=black] (4,1) circle [radius=.05];
\draw [fill=black] (4,-1) circle [radius=.05];
\node at (4,-1.2) {$w_2$};
\node at (4,1.2) {$w_1$};
\draw (3,1) -- (4,1);
\draw (3,-1) -- (4,-1);
\node at (5,2.2) {$p$};
\node at (5,1.2) {$q$};
\node at (5,-.8) {$r$};
\node at (5,-1.8) {$s$};
\draw [fill=black] (5,2) circle [radius=.05];
\draw [fill=black] (5,1) circle [radius=.05];
\draw [fill=black] (5,-2) circle [radius=.05];
\draw [fill=black] (5,-1) circle [radius=.05];
\draw (4,1) -- (5,1);
\draw (4,1) -- (5,2);
\draw (4,-1) -- (5,-1);
\draw (4,-1) -- (5,-2);
\draw [->] [ultra thick] (6,0) -- (7,0);
}

\hspace{3in}
\pic{
\draw (10,0) ellipse (1 and .2);
\draw (13,0) ellipse (1.4 and .3);
\node at (10,0) {$v,v_1,v_2$};
\node at (13,0) {$u,u_1,u_2,w_1,w_2$};
\node at (8,2.2) {$a$};
\node at (8,1.2) {$b$};
\node at (8,-.8) {$c$};
\node at (8,-1.8) {$d$};
\draw [fill=black] (8,2) circle [radius=.05];
\draw [fill=black] (8,1) circle [radius=.05];
\draw [fill=black] (8,-2) circle [radius=.05];
\draw [fill=black] (8,-1) circle [radius=.05];
\draw (9,0) -- (8,1);
\draw (9,0) -- (8,2);
\draw (9,0) -- (8,-1);
\draw (9,0) -- (8,-2);
\node at (15.4,2.2) {$p$};
\node at (15.4,1.2) {$q$};
\node at (15.4,-.8) {$r$};
\node at (15.4,-1.8) {$s$};
\draw [fill=black] (15.4,2) circle [radius=.05];
\draw [fill=black] (15.4,1) circle [radius=.05];
\draw [fill=black] (15.4,-2) circle [radius=.05];
\draw [fill=black] (15.4,-1) circle [radius=.05];
\draw (14.4,0) -- (15.4,1);
\draw (14.4,0) -- (15.4,2);
\draw (14.4,0) -- (15.4,-1);
\draw (14.4,0) -- (15.4,-2);
}

\end{document}